\newcolumntype{Y}{>{\centering\arraybackslash}X}
\def\cD{{\mathcal{D}}}
\def\cF{{\mathcal{F}}}
\def\cG{{\mathcal{G}}}
\def\cO{{\mathcal{O}}}
\def\cV{{\mathcal{V}}}
\def\bF{{\mathbb{F}}}
\def\bN{{\mathbb{N}}}
\def\bQ{{\mathbb{Q}}}
\def\bR{{\mathbb{R}}}
\def\bU{{\mathbb{U}}}
\def\bX{{\mathbb{X}}}
\def\bZ{{\mathbb{Z}}}
\theoremstyle{plain} \numberwithin{equation}{section}
\newtheorem{theorem}{Theorem}[section]
\numberwithin{theorem}{section}
\newtheorem{proposition}[theorem]{Proposition}
\newtheorem{problem}[theorem]{Problem}
\theoremstyle{definition}
\newtheorem{definition}[theorem]{Definition}
\newtheorem{remark}[theorem]{Remark}
\newtheorem{example}[theorem]{Example}
\theoremstyle{plain}
\newcommand{\ring}{k[x_1,\ldots, x_n]}
\newcommand{\ideal}[1]{\langle{#1}\rangle}
\newcommand{\lt}[1]{\mathrm{LT}({#1})}
\newcommand{\fs}{f_1,\ldots, f_s}
\newcommand{\gs}{g_1,\ldots, g_t}
\newcommand{\xs}{x_1,\ldots, x_n}
\newcommand{\lexprec}{\prec_{\mathrm{lex}}}
\newcommand{\lexsucc}{\succ_{\mathrm{lex}}}
\newcommand{\grlexprec}{\prec_{\mathrm{grlex}}}
\newcommand{\gb}{Gr\"{o}bner }
\newcommand{\SL}[1]{\mathrm{SL}_{n}({#1})}
\colorlet{pink}{red!40}
\colorlet{lightblue}{blue!30}
\colorlet{lightgreen}{green!30}
\title{Learning to Compute \gb Bases}
\author{
       \name \hspace{-3mm} Hiroshi Kera \email \texttt{\href{mailto:kera@chiba-u.jp}{kera@chiba-u.jp}} \\
       \addr 
       Chiba University / Zuse Institute Berlin\\
       Chiba, Japan / Berlin, Germany
       \AND
       \name Yuki Ishihara \email \texttt{\href{mailto:ishihara.yuki@nihon-u.ac.jp}{ishihara.yuki@nihon-u.ac.jp}} \\
       \addr 
       Nihon University\\
       Tokyo, Japan
       \AND
       \name Yuta Kambe \email \texttt{\href{mailto:kambe.yuta@bx.mitsubishielectric.co.jp}{kambe.yuta@bx.mitsubishielectric.co.jp}} \\
       \addr 
       Mitsubishi Electric Information Technology R\&D Center\\
       Kanagawa, Japan \\
       \name Tristan Vaccon \email \texttt{\href{mailto:tristan.vaccon@unilim.fr}{tristan.vaccon@unilim.fr}} \\
       \addr 
       Universit\'{e} de Limoges\\
       Limoges, France \\
       \name Kazuhiro Yokoyama \email \texttt{\href{mailto:kazuhiro@rikkyo.ac.jp}{kazuhiro@rikkyo.ac.jp}} \\
       \addr 
       Rikkyo University (St. Paul's University)\\
       Tokyo, Japan
       }
\begin{document}

\maketitle

\begin{abstract}
  Solving a polynomial system, or computing an associated \gb basis, has been a fundamental task in computational algebra. However, it is also known for its notorious doubly exponential time complexity in the number of variables in the worst case. This paper is the first to address the learning of \gb basis computation with Transformers. 
  The training requires many pairs of a polynomial system and the associated \gb basis, raising two novel algebraic problems: random generation of \gb bases and transforming them into non-\gb ones, termed as backward \gb problem. 
  We resolve these problems with 0-dimensional radical ideals, the ideals appearing in various applications. 
  Further, we propose a hybrid input embedding to handle coefficient tokens with continuity bias and avoid the growth of the vocabulary set.
  The experiments show that our dataset generation method is a few orders of magnitude faster than a naive approach, overcoming a crucial challenge in learning to compute \gb bases, and \gb computation is learnable in a particular class. 
\end{abstract}

\section{Introduction}
Understanding the properties of polynomial systems and solving them have been a fundamental problem in computational algebra and algebraic geometry with vast applications in cryptography~\citep{Bard2009AlgebraicCryptanalysis,MQchallenge}, control theory~\citep{GBControl}, statistics~\citep{DiaconisSturmfels98,HibiGB14}, computer vision~\citep{GBComputerVision}, systems biology~\citep{laubenbacher2009computer}, and so forth. 
Special sets of polynomials called \gb bases~\citep{buchberger1965algorithm} play a key role to this end. In linear algebra, the Gaussian elimination simplifies or solves a system of linear equations by transforming its coefficient matrix into the reduced row echelon form. Similarly, a \gb basis can be regarded as a reduced form of a given polynomial system, and its computation is a generalization of the Gaussian elimination to general polynomial systems. 
However, computing a \gb basis is known for its notoriously bad computational cost in theory and practice. It is an NP-hard problem with the doubly exponential worst-case time complexity in the number of variables~\citep{MM1982ComplexityGB, Dube90complexityGB}. Nevertheless, because of its importance, various algorithms have been proposed in computational algebra to obtain \gb bases in better runtime. Examples include Faug\`ere's F4/F5 algorithms~\citep{faugere1999f4,faugere2002f5} and M4GB~\citep{makarim2017m4gb}.

In this study, we investigate \gb basis computation from a learning perspective, envisioning it as a practical compromise to address large-scale polynomial system solving and understanding when mathematical algorithms are computationally intractable. The learning approach does not require explicit design of computational procedures, and we only need to train a model using a large amount of (non-\gb set, \gb basis) pairs. Further, if we restrict ourselves to a particular class of \gb bases (or associated \textit{ideals}), the model may internally find some patterns useful for prediction. The success of learning indicates the existence of such patterns, which encourages the improvement of mathematical algorithms and heuristics.
Several recent studies have already addressed mathematical tasks via learning, particularly using Transformers~\citep{lample2020deep,biggio2021neural,charton2022linear}. For example, \cite{lample2020deep} showed that Transformers can learn symbolic integration simply by observing many $(\dv*{f}{x}, f)$ pairs in training. The training samples are generated by first randomly generating $f$ and computing its derivative $\dv*{f}{x}$ and/or by the reverse process.

However, a crucial challenge in the learning of \gb basis computation is that it is mathematically unknown how to efficiently generate many (non-\gb set, \gb basis) pairs. 
We need an efficient backward approach~(i.e., \textit{solution-to-problem} computation) because,  as discussed above, the forward approach~(i.e., \textit{problem-to-solution} computation) is prohibitively expensive.
To this end, we frame two problems: (i) a random generation of \gb bases and (ii) a backward transformation from a \gb basis to an associated non-\gb set.
To our knowledge, neither of them has been addressed in the study of \gb bases because of the lack of motivations; all the efforts have been dedicated to the forward computation from a non-\gb set to \gb basis. 

Another challenge in the learning approach using Transformers lies in the tokenization of polynomials on infinite fields, such as $\bR$ and $\bQ$. To cover a wide range of coefficients, one has to either prepare numerous number tokens or split a number into digits. The former requires a large embedding matrix, and the latter incurs large attention matrices due to the lengthy sequences. To resolve this, we introduce a continuous embedding scheme, which embeds coefficient tokens by a small network and avoids the tradeoff between vocabulary size and sequence length. The continuity of the function realized by the network naturally implements the continuity of the numbers in the embedding.

We summarize the contributions as follows.
\begin{itemize}
    \item We investigate the first learning approach to the \gb computation using Transformers and experimentally show its learnability. Unlike most prior studies, our results indicate that training a Transformer may be a compromise to NP-hard problems to which no efficient (even approximate or probabilistic) algorithms have been designed.
    \item We uncovered two unexplored algebraic problems---random generation of \gb bases and backward \gb problem and propose efficient methods to address them in the 0-dimensional case. 
    The problems are essential to the learning approach but also algebraically interesting and need interaction between computational algebra and machine learning. 
    \item We propose a new input embedding to efficiently handle a large range of coefficients without the tradeoff between the size of the embedding matrix and attention maps.
\end{itemize}
Our experiments show that the proposed dataset generation is highly efficient and faster than a baseline method by a few orders of magnitude. Further, we observe a learnability gap between polynomials on finite fields and infinite fields while predicting polynomial supports are more tractable.

\section{Related Work}
\paragraph{\gb basis computation.} 
\gb basis is one of the fundamental concepts in %
algebraic geometry and commutative ring theory~\citep{cox2005solving, singular}. 
By its computational aspect, 
\gb basis is a very useful tool for analyzing the mathematical structures of solutions of algebraic constraints. Notably, the form of \gb bases is suited for finding solutions and allows parametric coefficients, and thus, it is vital to make \gb basis computation efficient and practical in applications. 
Following the definition of \gb bases in~\citep{buchberger1965algorithm},
the original algorithm to compute them can be presented
as \begin{enumerate*} \item[(i)] create potential new leading
terms by constructing \textit{S-polynomials}, \item[(ii)] reduce them
either to zero or to new polynomials for the \gb basis, and \item[(iii)] repeat until no new S-polynomials can be constructed.\end{enumerate*}
Plenty of work has been developed
to surpass this algorithm.
There are four main strategies: 
\begin{enumerate*}
\item[(a)] avoiding unnecessary S-polynomials based on the F5 algorithm and the more general {\em signature-based algorithms} \citep{faugere2002f5,BFS15ComplexityF5}. 
Machine learning appeared for this task in \citep{Peifer2020}.
\item[(b)] More efficient reduction using efficient linear algebraic computations using~\citep{faugere1999f4} and the very recent GPU-using~\citep{BGLM2023GPUGroebner}.
\item[(c)] Performing {\em modular computations}, following~\citep{arnold2003, noro2018},
to prevent {\em coefficient growth} during the computation.
\item[(d)] Using the structure of the ideal, e.g.,~\citep{faugere1993efficient,BNS2022} for change of term ordering
for 0-dimensional ideals or \citep{traverso1997}
when the Hilbert function is known. 
\end{enumerate*}
In this study, we present the fifth strategy: (e) \gb basis computation fully via learning without specifying any mathematical procedures.  

\paragraph{Transformers for mathematics.}

Recent studies have revealed that Transformers can be used for mathematical reasoning and symbolic computation. The training only requires samples (i.e., problem--solution pairs), and no explicit mathematical procedures need to be specified. 
In \citep{lample2020deep}, the first study that uses Transformers for mathematical problems is presented. It showed that Transformers can learn symbolic integration and differential equation solving with training with sufficiently many and diverse samples. 
Since then, Transformers have been applied to checking local stability and controllability of differential equations~\citep{charton2021learning}, polynomial simplification~\citep{agarwal2021analyzing}, linear algebra~\citep{charton2022linear,charton2022what}, symbolic regression~\citep{biggio2021neural,kamienny2022endtoend,dascoli2022deep,kamienny2023deep}, Lyapunov function design~\citep{alfarano2024global} and attacking the LWE cryptography~\citep{wenger2022salsa,li2023salsa}.
In~\citep{saxton2019analysing}, comprehensive experiments over various mathematical tasks are provided.
In contrast to the aforementioned studies, we found that \gb basis computation, an NP-hard problem, even has an algebraic challenge in the dataset generation. This paper introduces unexplored algebraic problems and provides an efficient algorithm for a special but important case (i.e., 0-dimensional ideals), thereby realizing an experimental validation of the learning of \gb basis computation. 

\section{Notations and Definitions}\label{sec:preliminary}

We introduce the necessary notations and definitions in this Section.
The reader interested in a gentle introduction to Gröbner basis theory can refer to the classical book~\citep{cox2015ideals}. For their comfort, we have moreover compiled most elementary additional definitions and notations in App.~\ref{sec:basic-definitions}.

We consider a polynomial ring $k[x_1,\ldots, x_n]$ with  a field $k$ and variables $x_1, \ldots, x_n$.
For a set $F \subset \ring,$ the ideal generated by
$F$ is denoted by $\ideal{F}.$
Once a term order on the terms of $\ring$ is fixed,
one can define leading terms and \gb bases.

\begin{definition}[Leading term] 
Let $F = \qty{\fs} \subset \ring$ and let $\prec$ be a term order. The leading term $\lt{f_i}$ of $f_i$ is the largest term in $f_i$ in ordering $\prec$. The leading term set of $F$ is $\lt{F} = \qty{\lt{f_1}, \ldots, \lt{f_s}}$.
\end{definition}

\begin{definition}[\gb basis] Fix a term order $\prec$. A finite subset $G$ of an ideal $I$ is said to be a $\prec$-\textit{\gb basis} of $I$ if $\ideal{\lt{G}} = \ideal{\lt{I}}$.
\end{definition}

The condition $\ideal{\lt{G}} = \ideal{\lt{I}}$ means that for any element $h\in I$, the leading term $\lt{h}$ is divided by the leading term $\lt{g}$ of an element $g \in G$. It gives a complete test whether a given polynomial $h$ is in $I$ or not by polynomial division with $G$, similar to Gaussian elimination by a basis of a vector space. The remainder is 0 means that $h \in I$, and otherwise means that $h \not \in I$. This is related to finding solutions. Roughly speaking, if $h \in I = \langle f_1,\ldots,f_s \rangle$, then we have a form $h = \sum_{i=1}^s h_i f_i$ and the system $f_1(\xs) = \cdots = f_s(\xs) = 0$ shares solutions with the equation $h(\xs)=0$.

Note that $\ideal{\lt{G}} \subset \ideal{\lt{I}}$ is trivial from $G \subset I$. The nontriviality of the \gb basis lies in $\ideal{\lt{G}} \supset \ideal{\lt{I}}$; that is, a finite number of leading terms can generate the leading term of any polynomial in the infinite set $I$.
The Hilbert basis theorem~\citep{cox2015ideals} guarantees that every ideal $I\ne\qty{0}$ has a \gb basis. Moreover, using the
multivariate division algorithm, one gets that any \gb basis $G$ of an
ideal $I$ generates $I$.
We are particularly interested in the \textit{reduced} \gb basis $G$ of $I = \ideal{F}$, which is unique once the term order is fixed.

\paragraph{Intuition of \gb bases and system solving.} Let $G = \{\gs\}$ be a \gb basis of an ideal $\ideal{F} = \ideal{\fs}$. 
The polynomial system $g_1(\xs)  = \cdots = g_t(\xs) = 0$ is a simplified form of $f_1(\xs) = \cdots = f_s(\xs) = 0$ with the same solution set. With the term order $\lexprec$, $G$ has a form $g_1\in k[x_{n_1},\ldots,x_n], g_2 \in k[x_{n_2},\ldots,x_n], \ldots, g_{t} \in k[x_{n_t},\ldots,x_n]$ with $n_1 \le n_2 \le \ldots \le n_t$, which may be regarded as the ``reduced row echelon form'' of a polynomial system.
In our particular case (i.e., 0-dimensional 
 ideals in shape position; cf. Sec.~\ref{sec:random-GB}), we have $(n_1, n_2, \ldots, n_t) = (1, 2, \ldots, n)$. Thus, one can obtain the solutions of the polynomial system using a backward substitution, i.e., by first solving a univariate polynomial $g_t$, next solving bivariate polynomial $g_{t-1}$, which becomes univariate after substituting the solutions of $g_t$, and so forth.

\paragraph{Other notations.} The subset $\ring_{\le d} \subset \ring$ denotes the set of all polynomials of total degree at most $d$.
For a polynomial matrix $A\in \ring^{s\times s}$, its determinant is 
 given by $\det(A) \in \ring$. The set $\bF_{p}$ with a prime number $p$ denotes the finite field of order $p$. The set $\mathrm{ST}(n,\ring)$ denotes the set of upper-triangular matrices with all-one diagonal entries (i.e.,
unimodular upper-triangular matrices) with entries in $\ring$.
 The total degree of $f \in \ring$ is denoted by $\deg(f)$.

\section{New Algebraic Problems}\label{sec:new-algebraic-problems}
Our goal is to realize \gb basis computation through a machine learning model. To this end, we need a large training set $\{(F_i, G_i)\}_{i=1}^m$ with finite polynomial set $F_i \subset \ring$ and \gb basis $G_i$ of $\ideal{F_i}$.
As the computation from $F_i$ to $G_i$ is computationally expensive in general, we instead resort to \textit{backward generation}~(i.e., solution-to-problem process); that is, we generate a \gb basis $G_i$ randomly and transform it to non-\gb set $F_i$. 

What makes the learning of \gb basis computation hard is that, to our knowledge, neither (i) a random generation of \gb basis nor (ii) the backward transform from \gb basis to non-\gb set has been considered in computational algebra. Its primary interest has been instead posed on \gb basis computation (i.e., forward generation), and nothing motivates the random generation of \gb basis nor the backward transform. Interestingly, machine learning now sheds light on them. Formally, we address the following problems for dataset generation.

\begin{problem}[Random generation of \gb bases]\label{problem:random-GB}
     Find a collection $\mathcal{G} = \{G_i\}_{i=1}^m$ with the reduced \gb basis $G_i \subset \ring$ of $\ideal{G_i}$, $i=1,\ldots, m$. The collection should contain diverse bases, and we need an efficient algorithm for constructing them.
\end{problem}

\begin{problem}[Backward \gb problem]\label{problem:backward-GB}
    Given a \gb basis $G\subset \ring$, find a collection $\cF = \{F_i\}_{i=1}^{\mu}$ of polynomial sets that are not \gb bases but $\ideal{F_i} = \ideal{G}$ for $i = 1, \ldots, \mu$. 
    The collection should contain diverse sets, and we need an efficient algorithm for constructing them.
\end{problem}

Problems~\ref{problem:random-GB} and~\ref{problem:backward-GB} require the collections $\cG, \cF$ to contain diverse polynomial sets. Thus, the algorithms for these problems should not be deterministic but should have some controllable randomness. Several studies reported that the distribution of samples in a training set determines the generalization ability of models trained on it~\citep{lample2020deep,charton2022linear}. However, the distribution of non-\gb sets and \gb bases is an unexplored and challenging object of study. It can be another challenging topic and goes beyond the scope of the present study.

\subsection{Scope of this study}\label{sec:paper-scope}
Non-\gb sets have various forms across applications. For example, in cryptography (particularly post-quantum cryptosystems), polynomials are restricted to dense degree-2 polynomials and generated by an encryption scheme~\citep{MQchallenge}. On the other hand, in systems biology (particularly, reconstruction of gene regulatory networks), they are typically assumed to be sparse~\citep{laubenbacher2004computational}. In statistics (particularly algebraic statistics), they are restricted to binomials, i.e., polynomials with two monomials~\citep{sturmfels2002solving,HibiGB14}.

As the first study of \gb basis computation using Transformers, we do not focus on a particular application and instead address a generic case reflecting a motivation shared by various applications of computing \gb basis: solving polynomial systems or understanding ideals associated with polynomial systems having solutions. 
Particularly, we focus on 0-dimensional radical ideals, a special but fundamental class of ideals.
\begin{definition}[0-dimensional ideal]\label{def:0-dimensional ideal scope}
    Let $F$ be a set of polynomials in $\ring$. An ideal $\langle F \rangle$ is called a \textit{0-dimensional ideal} if all but a finite number of terms belong to $\lt{\ideal{F}}$. 
\end{definition}

In fact, the number of terms not belong to $\lt{\ideal{f_1,\ldots,f_s}}$ is an upper bound of the number of solutions of the system $f_1(\xs) = \cdots = f_s(\xs) = 0$. In particular, the finiteness of the number of terms not belong to $\lt{\ideal{f_1,\ldots,f_s}}$ implies the finiteness of the number of solutions. This is the reason why we call such ideals ``$0$-dimensional'' ideals in Def.~\ref{def:0-dimensional ideal scope}.

$0$-dimensional ideals are the fundamental ideals in the study of pure algebra. This is partly because of the ease of analysis. As Def.~\ref{def:0-dimensional-ideal-formal} shows, $0$-dimensional ideals relate to finite-dimensional vector spaces, and thus, analysis and algorithm design can be essentially addressed by matrices and linear algebra.

Also ideals in most practical scenarios are known to be 0-dimensional. For example, a multivariate public-key encrypted communication (a candidate of post-quantum cryptosystems) with a public polynomial system $F$ over a finite field $\bF_p$ will be broken if one finds any root of the system $F \cup \{x_1^p - x_1, \ldots, x_n^p-x_n\})$. One should note that the ideal $\langle F \cup \{x_1^p - x_1, \ldots, x_n^p-x_n\} \rangle$ is 0-dimensional~\citep[Sec.~2.2]{Ullah12}.
Generically, 0-dimensional ideals defined from polynomial systems having solutions are radical\footnote{See App.~\ref{sec:basic-definitions} for the definition.} (i.e., non-radical ideals are in a zero-measure set in the Zariski topology). 
The proofs of the results in the following sections can be found in App.~\ref{sec:proofs}.
Hereinafter, the sampling of polynomials is done by a uniform sampling of coefficients from a prescribed range. 

It is also worth noting that Transformers cannot be an efficient tool for \textit{general} \gb basis computation, and thus, we should focus on a particular class of ideals and pursue in-distribution accuracy. This is evident from the facts that \gb basis computation is NP-hard and that machine learning models perform best on in-distribution samples and do not generalize perfectly. Fortunately, unlike standard machine learning tasks (e.g., image classification task), users can frame their problems beforehand (i.e., they know what types of polynomials they want to handle), and they can collect as many training samples as they want if an efficient algorithm exists. As mentioned above, the form of non-\gb sets varies across applications, and thus, we focus on the generic case and leave the specialization to future work.

\subsection{Random generation of \gb bases}\label{sec:random-GB}
We address Prob.~\ref{problem:random-GB} using the fact that 0-dimensional radical ideals are generally \textit{in shape position}. 

\begin{definition}[Shape position]\label{def:shape-position}
Ideal $I \subset \ring$ is called in \textit{shape position} if some univariate polynomials $h, g_1, \ldots, g_{n-1} \in k[x_n]$ form the reduced $\lexprec$-\gb basis of $I$ as follows. 
\begin{align}\label{eq:shape-position}
    G = \{h, x_1 - g_1, \ldots, x_{n-1} - g_{n-1}\}.
\end{align}     
\end{definition}
As can be seen, the $\lexprec$-\gb basis consists of a univariate polynomial in $x_n$ and the difference of univariate polynomials in $x_n$ and a leading term $x_i$ for $i < n$.
While not all ideals are in shape position, 0-dimensional radical ideals are almost always in shape position: 
if an $\ideal{\fs} \subset \ring$ is a 0-dimensional and radical ideal, a random coordinate change $(y_1, \ldots, y_n) = (\xs)R$ with a regular (i.e., invertible) matrix $R\in k^n$ yields $\tilde{f}_1, \cdots, \tilde{f}_s \in k[y_1,\ldots,y_n]$, and the ideal $\ideal{y_1,\ldots,y_n}$ generally has the reduced $\lexprec$-\gb basis in the form of Eq.~\eqref{eq:shape-position}~(cf. Prop.~\ref{prop:radial-then-shape-position}). 

With this fact, an efficient sampling of \gb bases of 0-dimensional radical ideals can be realized by sampling $n$ polynomials in $k[x_{n}]$, i.e., $h, g_1, \ldots, g_{n-1}$ with $h\ne 0$. We have to make sure that the degree of $h$ is always greater than that of $g_1, \ldots, g_{n-1}$, which is necessary and sufficient for $G$ to be a reduced \gb basis.
This approach involves efficiency and randomness, and thus resolving Prob.~\ref{problem:random-GB}. 
Note that while our approach assumes term order $\lexprec$, if necessary, one can use an efficient change-of-ordering algorithm, e.g., the FGLM algorithm~\citep{faugere1993efficient}. The cost of the FGLM algorithm is $\cO(n \cdot \deg(h)^3)$ based on the number of arithmetic operations over $k$.
Besides the ideals in shape position, we also consider the Cauchy module in App.~\ref{sec:cauchy-module}, which defines another class of 0-dimensional ideals.

\subsection{Backward \gb problem}\label{sec:backward-GB}
To address Prob.~\ref{problem:backward-GB}, we consider the following problem. 
\begin{problem}\label{problem:F=AG}
    Let $I \subset \ring$ be a 0-dimensional ideal, and let $G = (\gs)^{\top} \in \ring^t$ be its $\prec$-\gb basis with respect to term order $\prec$.\footnote{We surcharge notations to mean that the set $\left\lbrace g_1,\dots,g_t \right\rbrace$ defined by the vector $G$ is a $\prec$-\gb basis.} Find a polynomial matrix $A \in \ring^{s\times t}$ giving a non-\gb set $F = (\fs)^{\top} = AG$ such that $\ideal{F} = \ideal{G}$.
\end{problem}
Namely, we generate a set of polynomials $F = (\fs)^{\top}$ from $G = (\gs)^{\top}$ by 
    $f_i = \sum_{j=1}^t a_{ij} g_j$ for $i = 1,\ldots, s$,
where $a_{ij} \in \ring$ denotes the $(i,j)$-th entry of $A$. Note that $\ideal{F}$ and $\ideal{G}$ are generally not identical, and the design of $A$ such that $\ideal{F}=\ideal{G}$ is of our question. 

A similar question was studied without the \gb condition in \citep{BEM01ResidualResultant,BusePhD}. They provided an algebraic necessary and sufficient condition for the polynomial system of $F$ to have a solution outside
the variety defined by $G$. 
This condition is expressed
explicitly by 
multivariate resultants. However, strong additional assumptions are required: $A,F, G$ are homogeneous, $G$ is a regular sequence, and in the end, $\left\langle F \right\rangle =\left\langle G \right\rangle$ is only satisfied up to saturation. 
Thus, they are not compatible with 
our setting and method for Prob.~\ref{problem:random-GB}.

Our analysis gives the following results for the design $A$ to achieve $\ideal{F} = \ideal{G}$ for the 0-dimensional case (without radicality or shape position assumption).
\begin{restatable}{theorem}{backward}\label{thm:backward}
    Let $G = (\gs)^{\top}$ be a \gb basis of a 0-dimensional ideal in $\ring$. Let $F = (\fs)^{\top} = AG$ with $A \in \ring^{s\times t}$.
    \begin{enumerate}
        \item If $\ideal{F} = \ideal{G}$, it implies $s \ge n$.
        \item If $A$ has a left-inverse in $\ring^{t\times s}$, $\ideal{F} = \ideal{G}$ holds.
        \item The equality $\ideal{F} = \ideal{G}$ holds if and only if there exists a matrix $B \in \ring^{t \times s}$ such that each row of $BA-E_t$ is a syzygy\footnote{Refer to App.~\ref{sec:basic-definitions} for the definition.} of $G$, where $E_t$ is the identity matrix of size $t$.

    \end{enumerate}
\end{restatable}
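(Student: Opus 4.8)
The plan is to prove the three parts roughly in reverse order of difficulty, extracting (1) and (2) as consequences of the structural characterization in (3), or at least proving them by the same circle of ideas. For part (3), the key observation is that $F = AG$ automatically gives $\ideal{F} \subseteq \ideal{G}$, so the only content is the reverse inclusion $\ideal{G} \subseteq \ideal{F}$, which holds if and only if each $g_j$ lies in $\ideal{F} = \ideal{f_1,\dots,f_s}$. Writing the desired membership in matrix form, $\ideal{G} \subseteq \ideal{F}$ is equivalent to the existence of $B \in \ring^{t\times s}$ with $BF \equiv G$ modulo the relations that already hold, i.e.\ with $G - BF$ lying entrywise in the ideal but representing the zero vector ``up to syzygies of $G$.'' Substituting $F = AG$ gives $G - BAG = (E_t - BA)G$, and the condition that this vector-valued expression equals zero as an element of $\ring^t / \syz(G)$ is exactly the statement that every row of $BA - E_t$ is a syzygy of $G$. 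So part (3) is essentially a translation exercise once one sets up the dictionary between ``$G \in \ideal{F}$'' and ``$B$ exists with $(E_t-BA)G = 0$ as a combination,'' and the main thing to be careful about is that $G$ may have nontrivial syzygies (it is a \gb basis, not a regular sequence), which is precisely why the clean statement $BA = E_t$ must be relaxed to ``$BA - E_t \in \syz(G)$ rowwise.''

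For part (2), if $A$ has a left-inverse $B \in \ring^{t\times s}$ with $BA = E_t$, then trivially each row of $BA - E_t = 0$ is a syzygy of $G$, so part (3) immediately gives $\ideal{F} = \ideal{G}$; alternatively one argues directly that $BF = BAG = G$, so each $g_j$ is an $\ring$-linear combination of the $f_i$, hence $\ideal{G}\subseteq\ideal{F}$, and the reverse inclusion is automatic. Either way this is a one-line deduction and I would present it that way.

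Part (1) is the part requiring a genuinely algebraic input rather than bookkeeping, and I expect it to be the main obstacle. The claim is that $s \ge n$ is necessary for $\ideal{F} = \ideal{G}$ when the ideal is zero-dimensional. The natural approach is via Krull dimension / height: a zero-dimensional ideal $I = \ideal{G}$ in $\ring$ has height $n$ (its quotient $\ring/I$ is Artinian, so every minimal prime over $I$ is maximal, of height $n$), and by Krull's height theorem (the converse part / the principal ideal theorem) an ideal of height $n$ cannot be generated by fewer than $n$ elements. Since $\ideal{F}$ is generated by $f_1,\dots,f_s$, we need $s \ge \operatorname{ht}(\ideal{F}) = \operatorname{ht}(\ideal{G}) = n$. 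The subtlety to handle carefully is that $k$ need not be algebraically closed, so I would phrase the height/dimension argument over $\ring$ directly in terms of Krull dimension of $\ring/\ideal{G}$ being $0$ (equivalently the finite-term condition in the definition of zero-dimensional ideal forces $\ring/\ideal{G}$ to be a finite-dimensional $k$-vector space, hence Artinian), and then invoke the generalized principal ideal theorem: if an ideal of a Noetherian ring is generated by $s$ elements, each of its minimal primes has height at most $s$; taking a minimal prime of $\ideal{F} = \ideal{G}$, which has height $n$ since $\ring/\ideal{G}$ is zero-dimensional and $\ring$ is Cohen–Macaulay (so all maximal ideals have height $n$), yields $n \le s$. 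I would keep an eye out for the degenerate case $\ideal{G} = \ring$ (the empty variety), where the statement is vacuous or needs the convention that $1 \in G$; in the setting of the paper $G$ comes from an ideal with solutions, so this does not arise, but a clean writeup should note it.
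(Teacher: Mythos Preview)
Your proposal is correct and matches the paper's proof: part~(1) via Krull's principal ideal theorem (the paper states it as $\dim \ring/I \ge n-s$, you as $\operatorname{ht}(I)\le s$; these are equivalent), part~(2) directly from $BF=BAG=G$, and part~(3) by rewriting $\ideal{G}\subseteq\ideal{F}$ as $G=BF$ for some $B$ and then as $(E_t-BA)G=0$. One exposition point to tighten in~(3): the equivalence $\ideal{G}\subseteq\ideal{F}\Leftrightarrow \exists B,\ G=BF$ is exact, not ``modulo relations,'' and the vector $(E_t-BA)G$ must vanish in $\ring^t$ itself (not in a quotient by $\syz(G)$); it is the \emph{rows} of $E_t-BA$ that lie in $\syz(G)\subset\ring^t$, which is precisely what $(E_t-BA)G=0$ says.
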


The first statement of Thm.~\ref{thm:backward} argues that polynomial matrix $A$ should have at least $n$ rows. For an ideal in shape position, we have a $\lexprec$-\gb basis $G$ of size $n$, and thus, $A$ is a square or tall matrix. The second statement shows a sufficient condition. The third statement provides a necessary and sufficient condition.
Using the second statement, we design a simple random transform of a \gb basis to a non-\gb set without changing the ideal. 

We now assume $\prec=\lexprec$ and 0-dimensional ideals in shape position. Then, $G$ has exactly $n$ generators. 
When $s=n$, we have the following.
\begin{restatable}{proposition}{backwardSq}\label{prop:backward-sq}
    For any $A \in \ring^{n\times n}$ with $\det(A) \in k\setminus\{0\}$, we have $\ideal{F} = \ideal{G}$. 
\end{restatable}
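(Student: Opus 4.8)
The plan is to obtain the proposition as an immediate corollary of the second item of Theorem~\ref{thm:backward}. Under the standing assumptions of this paragraph ($\prec = \lexprec$ and $\ideal{G}$ a zero-dimensional ideal in shape position), the reduced \gb basis $G$ has exactly $n$ elements, so $t = n$; with $s = n$ as well, $A$ is $n \times n$, i.e.\ an element of $\ring^{t \times s}$. To apply Theorem~\ref{thm:backward}(2) it therefore suffices to produce a left-inverse of $A$ inside $\ring^{n\times n}$.

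The only ingredient needed is the standard fact that a square matrix over the commutative ring $\ring$ is invertible precisely when its determinant is a unit of $\ring$, and that the units of $\ring$ are exactly the nonzero constants $k \setminus \{0\}$ (since $k$ is a field and $\ring$ is an integral domain). Concretely, the adjugate identity $A\,\mathrm{adj}(A) = \mathrm{adj}(A)\,A = \det(A)\, E_n$ shows that, when $\det(A) \in k\setminus\{0\}$, one may set
\[
    A^{-1} = \det(A)^{-1}\,\mathrm{adj}(A) \in \ring^{n\times n},
\]
which makes sense because $\det(A)^{-1} \in k \subset \ring$ and $\mathrm{adj}(A)$ has polynomial entries. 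In particular $A^{-1} A = E_n$, so $A$ has a (two-sided, hence left) inverse in $\ring^{n\times n} = \ring^{t\times s}$.

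Feeding this left-inverse into Theorem~\ref{thm:backward}(2) gives $\ideal{F} = \ideal{G}$. If one prefers a self-contained argument rather than a citation, the two inclusions are transparent from $A^{-1}$: the inclusion $\ideal{F} \subseteq \ideal{G}$ is immediate from $F = AG$, while $G = A^{-1} F$ yields the reverse inclusion $\ideal{G} \subseteq \ideal{F}$. There is no genuine obstacle in this step; the substance of the claim lives entirely in Theorem~\ref{thm:backward}(2), and here we merely record that ``$\det(A)$ a nonzero constant'' is exactly the condition making $A$ invertible over the polynomial ring.
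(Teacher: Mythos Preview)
Your proof is correct and follows essentially the same approach as the paper: construct an inverse of $A$ over $\ring$ via the adjugate/Cramer's rule identity (the paper phrases it as Cramer's rule, you as the adjugate formula---these are the same construction), observe that $\det(A)^{-1}$ lies in $k\subset\ring$, and then invoke Theorem~\ref{thm:backward}(2). Your added remarks on why $t=n$ and the self-contained two-inclusion argument are fine elaborations but not substantively different.
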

As non-zero constant scaling does not change the ideal, we focus on $A$ with $\det(A)=\pm 1$ without loss of generality. Such $A$ can be constructed using the Bruhat decomposition:
\begin{align}
    A = U_1PU_2, \label{eq:Bruhat}
\end{align}
where $U_1, U_2 \in \mathrm{ST}(n,\ring)$ are upper-triangular matrices with all-one diagonal entries (i.e., unimodular upper-triangular matrices) and $P \in \{0,1\}^{n\times n}$ denotes a permutation matrix. Noting that $A^{-1}$ satisfies $A^{-1}A = E_n$, we have $\ideal{AG} = \ideal{G}$ from Thm.~\ref{thm:backward}. Therefore, random sampling $(U_1, U_2, P)$ of unimodular upper-triangular matrices $U_1, U_2$ and a permutation matrix $P$ resolves the backward \gb problem for $s = n$. 

We extend this idea to the case of $s > n$ using a rectangular unimodular upper-triangular matrix:
\begin{align}\label{eq:upper-triangle-A}
    U_2 = \mqty( U_2' \\ O_{s-n, n} ) \in \ring^{s\times n},
\end{align}
where $U_2' \in \mathrm{ST}(n,\ring)$ and $O_{s-n, n} \in \ring^{(s-n) \times n}$ is the zero matrix. The permutation matrix is now $P \in \{0, 1\}^{s\times s}$.
Note that $U_2 G$ already gives a non-\gb set such that $\ideal{U_2 G} = \ideal{G}$; however, the polynomials in the last $s-n$ entries of $U_2 G$ are all zero by its construction. To avoid this, the permutation matrix $P$ shuffles the rows and also $U_1$ to exclude the zero polynomial from the final polynomial set.

To summarize, our strategy is to compute $F = U_1PU_2G$, which only requires a sampling of $\cO(s^2)$ polynomials in $\ring$, and $\cO(n^2+s^2)$-times multiplications of polynomials. Note that even in the large polynomial systems in the MQ challenge, a post-quantum cryptography challenge, we have $n < 100$ and $s < 200$~\citep{MQchallenge}.

\subsection{Dataset generation algorithm}
The combination of the discussion in the previous sections gives an efficient dataset generation algorithm (see Alg.~\ref{alg:dataset-generation} for a pseudocode).
\begin{restatable}{theorem}{datasetGeneration}
    Consider a polynomial ring $\ring$. Given the dataset size $m$, maximum degrees $d, d' > 0$, maximum size of non-\gb set $s_{\max}\ge n$, and term order $\prec$, Alg.~\ref{alg:dataset-generation} returns a collection $\cD = \{(F_i, G_i)\}_{i=1}^m$ with the following properties: For all $i = 1, \ldots, m$, 
    \begin{enumerate}
        \item $\abs{G_i} = n$ and  $\abs{F_i} \le s_{\max}$.
        \item The set $G_i$ is the reduced $\prec$-\gb basis of $\ideal{F_i}$. The set $F_i$ is not, unless $G_i, U_1, U_2', P$ are all sampled in a non-trivial Zariski closed subset.\footnote{This can happen with probability zero if $k$ is infinite and very low probability over a large finite field.}
        \item The ideal $\ideal{F_i}=\ideal{G_i}$ is a 0-dimensional ideal in shape position.
    \end{enumerate}
    The time complexity is $\cO(m (n S_{1,d} + s^2S_{n,d'} + (n^2 + s^2)M_{n, 2d'+d}))$ when $\prec = \lexprec$, where $S_{n, d}$ denotes the complexity of sampling an $n$-variate polynomial with total degree at most $d$, and $M_{n, d}$ denotes that of multiplying two $n$-variate polynomials with total degree at most $d$. If $\prec \ne \lexprec$, $\cO(mnd^3)$ is additionally needed.
\end{restatable}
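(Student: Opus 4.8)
The statement packages four assertions about the pair $(F_i,G_i)$ produced at iteration $i$ of Alg.~\ref{alg:dataset-generation} (plus a global runtime bound), and I would prove them separately, pushing each onto an already-established fact wherever possible. Claim~1 is a direct count; Claim~3 is an application of Buchberger's criterion to the very special shape of the constructed $G_i$; Claim~2 is exactly where Thm.~\ref{thm:backward}(2) (and, for the negative half, uniqueness of reduced \gb bases) does the work; and the complexity is the per-iteration cost multiplied by $m$.

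\textbf{Claims 1 and 3.} By construction $G_i=\{h,\,x_1-g_1,\dots,x_{n-1}-g_{n-1}\}$ with $h\in k[x_n]$ non-constant and $\deg g_j\le\deg(h)-1$. Under $\lexprec$ (with $x_1\lexsucc\cdots\lexsucc x_n$) one has $\lt{h}=x_n^{\deg(h)}$ and $\lt{x_j-g_j}=x_j$ since $x_j\lexsucc x_n^{\ell}$ for all $\ell$; hence the $n$ generators are pairwise distinct and $\abs{G_i}=n$, while $F_i=U_1PU_2G_i\in\ring^{s}$ gives $\abs{F_i}\le s\le s_{\max}$. For Claim~3 I would first take $\prec=\lexprec$: the leading terms $x_n^{\deg(h)},x_1,\dots,x_{n-1}$ are pairwise coprime, so every S-polynomial of $G_i$ reduces to zero by the coprimality (product) criterion, and $G_i$ is a $\lexprec$-\gb basis; it is reduced because each generator is monic and, using $\deg g_j<\deg(h)$, no leading term of one generator divides any term of another; the standard monomials are $\{1,x_n,\dots,x_n^{\deg(h)-1}\}$, a finite set, so $\ideal{G_i}$ is zero-dimensional; and $G_i$ has exactly the form in Def.~\ref{def:shape-position}, so the ideal is in shape position. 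For a general $\prec$, the call $\mathrm{FGLM}(G_i,\lexprec,\prec)$ takes this reduced $\lexprec$-\gb basis of a zero-dimensional ideal to the reduced $\prec$-\gb basis of the \emph{same} ideal, and shape position is a property of the ideal, not of the chosen order, so Claim~3 survives.

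\textbf{Claim 2.} Write $A=U_1PU_2$ with $U_2=[\,{U_2'}^{\top}\ O_{n,s-n}\,]^{\top}$. Since $U_1$ is unimodular upper-triangular and $P$ a permutation, both are invertible over $\ring$, and $U_2'$ is invertible over $\ring$; hence $B=[\,{U_2'}^{-1}\ O_{n,s-n}\,]\,P^{-1}U_1^{-1}\in\ring^{n\times s}$ satisfies $BA=E_n$, i.e.\ $A$ has a left inverse. Thm.~\ref{thm:backward}(2) then gives $\ideal{F_i}=\ideal{G_i}$, and combined with Claim~3 (and the FGLM step when $\prec\ne\lexprec$) this shows $G_i$ is the reduced $\prec$-\gb basis of $\ideal{F_i}$. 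For the negative part, uniqueness of the reduced $\prec$-\gb basis means $F_i$ itself is that basis if and only if $F_i=G_i$ as sets. If $s>n$ this is impossible by cardinality. If $s=n$, then for each fixed permutation matrix $P$ the condition $F_i=G_i$ is a system of polynomial equations in the sampled coefficients of $h,g_1,\dots,g_{n-1},U_1,U_2'$, hence cuts out a Zariski-closed subset of the coefficient space; it is proper because $d'>0$ allows an explicit witness with $F_i\ne G_i$ — e.g.\ $U_1=E_n$, $P=E_n$, and $U_2'$ the identity except for a single off-diagonal entry of positive degree, which makes one entry of $F_i$ contain a monomial appearing in no element of $G_i$. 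Over $k$ infinite this exceptional set has measure zero, and over a large finite field it is hit with small probability, matching the footnoted claim.

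\textbf{Time complexity and main obstacle.} Per iteration: sampling $h$ and $g_1,\dots,g_{n-1}$ (all univariate of degree $\le d$) costs $\cO(nS_{1,d})$; sampling $U_1$ and $U_2'$ costs $\cO((s^2+n^2)S_{n,d'})=\cO(s^2S_{n,d'})$ since $s\ge n$; sampling $P$ is $\cO(s)$. Forming $F_i=U_1PU_2G_i$ splits into $U_2'G_i$ ($\cO(n^2)$ polynomial multiplications, operand degrees $\le d',d$, output degree $\le d+d'$), the free permutation, and left multiplication by $U_1$ ($\cO(s^2)$ multiplications, operand degrees $\le d',d+d'$, output degree $\le 2d'+d$); additions are dominated, so this stage is $\cO((n^2+s^2)M_{n,2d'+d})$. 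When $\prec\ne\lexprec$, the FGLM call adds $\cO(n\deg(h)^3)=\cO(nd^3)$. Summing over $i=1,\dots,m$ and bounding $s\le s_{\max}$ gives the stated complexity. I expect the main obstacle to be the negative half of Claim~2: carefully certifying that the exceptional locus is a \emph{proper} Zariski-closed set — which over a finite $k$ must be argued through the defining equations rather than by a measure-theoretic genericity statement — and pinning down a clean witness; the remaining parts are inspection, appeals to Thm.~\ref{thm:backward} and the Shape Lemma discussion, or routine operation counting.
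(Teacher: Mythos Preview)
Your plan is sound and considerably more explicit than the paper's proof, which for Claims~1--3 simply defers to the discussion in Secs.~\ref{sec:random-GB}--\ref{sec:backward-GB} and for the complexity repeats the operation count you give. Claims~1, 3, the positive half of Claim~2, and the complexity all match the paper in substance.

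There is one real gap: the sentence ``If $s>n$ this is impossible by cardinality.'' The vector $F_i=U_1PU_2G_i\in\ring^{s}$ can have repeated or zero entries, so its underlying set may have size $\le n$. Concretely, $U_1$ is unit upper-triangular, so its last row is $e_s$; hence the last entry of $F_i$ equals the last entry of $PU_2G_i$, and whenever $P$ sends position $s$ to one of the $s-n$ padded zero rows of $U_2$ that entry is identically~$0$ (the paper's own success tables exhibit samples with $f_j=0$). Thus set equality $F_i=G_i$ is not excluded by counting when $s>n$. The fix stays within your framework: for each $P$ and each map $\{1,\dots,s\}\to G_i\cup\{0\}$ whose nonzero image is all of $G_i$, the condition that every $f_j$ equals its assigned target is a polynomial system in the sampled coefficients; the finite union over such assignments is the desired Zariski-closed locus, and your witness for properness still applies.

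The paper handles the Zariski locus by a different device: rather than invoking uniqueness of the reduced basis to force $F_i=G_i$, it observes that if $F_i$ were a $\prec$-\gb basis its leading terms would lie in a fixed finite set, and that ``$f_j$ has leading term $m$'' forces every coefficient of a monomial $\succ m$ in $f_j$ to vanish --- closed conditions on the ambient coefficient space of polynomials of degree $\le 2d'+d$. Your route is more elementary and gives an explicit witness for properness (which the paper does not spell out), but as you yourself flagged it requires care with the tuple-versus-set distinction; the paper's route sidesteps that issue entirely at the cost of a less concrete description of the exceptional set.
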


The proposed dataset generation method is a backward approach, which first generates solutions and then transforms them into problems. In this case, we have control over the complexity of the \gb bases and can add some intrinsic structure if any prior information is available. 
For example, a multi-variate encryption scheme encapsulates secret key information in the solution of a polynomial system with a single solution in a base field~\citep{MQchallenge}. The ideal associated with such a system is 0-dimensional and in shape position; namely, its \gb basis has the following form: $G = \langle x_n-a_n, x_1-a_1,\ldots,x_{n-1}-a_{n-1} \rangle$, where $a_1,a_2,\ldots, a_n$ are constants~\citep{Ullah12}.
The backward approach allows one to restrict the \gb bases in a dataset into such a class. 

This is not the case with forward approaches. While they may include prior information into non-\gb sets, it is computationally expensive to obtain the corresponding \gb bases. 
It is also worth noting that a naive forward approach, which randomly generates non-\gb sets and computes their \gb bases, should be avoided even if it were computationally tractable because, for example, if the \gb basis of such $F$ is generally $\{1\}$ when $|F| > n$.

\section{Hybrid Input Embedding}\label{sec:hybrid-input-embedding}
Transformers are an efficient learner of sequence-to-sequence functions.
They receive and generate a sequence of \textit{tokens}. In our context, for example, $\{x^2-10, y\}$ can be tokenized as [x, \texttt{\textasciicircum, 2, +, -10, <sep>, y}], a sequence of tokens.  The vocabulary set $\cV$ is the collection of all possible tokens. Each token $s \in \cV$ has a predesignated token ID $i(s)\in \bN$, and the input embedding layer of Transformer associates them with the $i(s)$-th row of the embedding matrix $W_\mathrm{E} \in \bR^{\abs{\cV} \times D}$, where $D$ is the embedding dimension. The embedding matrix is updated during training, and eventually, we have a nice vector representation for each token. 

However, this approach requires a large embedding matrix to handle a wide range of number tokens.\footnote{We may tokenize a number by the digits (e.g., 123 by [\texttt{1, 2, 3}]), but this makes input sequences long and affects the quadratic memory cost of attention mechanism. See \citep{charton2022linear} for the effect of number tokenization.} The number tokens dominate the vocabulary set, and Transformers have to learn the relationship of the number tokens from scratch during the training. At the inference, if the given numbers are out of range, the model cannot work.
In our case, this is inconvenient, particularly when the coefficient field is $k=\bQ$. For example, let $F = \{-5/3y^3 - y - 1/2, 7/2xy^2 - 5/3x - 2\}$, which we obtained from a random sampling with some upper bounds on the degree, number of terms, and integers appearing in numerators and denominators. Even the simplicity of $F$, its reduced $\lexprec$-\gb basis has a polynomial of $g_1 = x + 569520/427411y^2 - 158760/427411y + 612912/427411$. If we tokenize $\texttt{a/b}$ as $[\texttt{a, /, b}]$, we need to prepare more than a million integer tokens. Noting that $g_1 \approx x + 1.33y^2 - 0.37y + 1.43$, it is more reasonable to handle coefficients as real values and also implement the inductive bias on the continuity of numbers in the embedding.

We propose a hybrid input embedding that accepts both discrete token IDs and continuous values. Let $\bm{s} = [s_1, \ldots, s_L]$ to be a sequence of tokens. Some of these tokens are in $\cV$ and otherwise in $\bR$. For those in $\cV$, the standard input embedding based on the embedding matrix is applied. For the others, a small feed-forward network $f_{\mathrm{E}}: \bR \to \bR^D$ is applied. A Transformer with the proposed embedding should equip a regression head for these continuous tokens.
This allows us to handle any number as a single token without the explosion of the vocabulary set (i.e., embedding matrix). As feed-forward networks are a continuous function, they naturally implement the continuity of numbers; two close values $s_1, s_2 \in \bR$ are expected to be embedded in similar vectors. 
The hybrid input embedding has two advantages. First, as claimed above, we are no longer suffering from the large embedding matrix for registering many number tokens and can naturally implement the continuity bias. Second, We do not have the ``out-of-range'' issue. Further, we can scale the coefficients of given polynomials globally so that they match our training coefficient range.\footnote{In the follow-up survey, we found that a very similar idea was proposed in \citep{golkar2023xval} in a broader context, which shows continuous embedding of number tokens perform better than the discrete embedding in various tasks. } 
Refer to App.~\ref{sec:hybrid-input-embedding-app} for the details.

\section{Experiments}\label{sec:experiments}
We now present the efficiency of the proposed dataset generation method and the learnability of \gb basis computation.\footnote{The code is available at \url{https://github.com/HiroshiKERA/transformer-groebner}.} 
All the experiments were conducted with 48-core CPUs, 768GB RAM, and NVIDIA RTX A6000ada GPUs. The training of a model takes less than a day on a single GPU. 
More information on the profile of generated datasets, the training setup, and additional experimental results are given in Apps.~\ref{sec:experiment-setup} and~\ref{sec:experiment-extra}. 

\subsection{Dataset generation}\label{sec:dataset-generation}
\begin{table}[!t]
  \centering
    \caption{Runtime comparison (in seconds) of forward generation~(F.) and backward generation~(B.) of dataset $\cD_{n}(\bQ)$ of size 1,000. The forward generation used either of the three algorithms provided in SageMath with the libSingular backend. We set a timeout limit to five seconds (added to the total runtime at every occurrence) for each \gb basis computation. The numbers with $\dag$ and $\ddag$ include the timeout for more than 13\% and 
    25\% of the runs, respectively (cf.~Tab.~\ref{table:forward-generation-success-rate} for the success rate).}
  \label{table:runtime-comparison}
  \begin{tabularx}{\linewidth}{l*{3}{Y}*{1}{Y}}
    \toprule
    Method & $n=2$ & $n=3$ & $n=4$ & $n=5$\\
    \Xhline{2\arrayrulewidth}
    F. (\textsc{std}) & 4.20 & 216.3 & 740.1$^\dag$ & 1411.1$^\ddag$ \\
    F. (\textsc{slimgb}) & 4.29 & 183.4 & 697.5$^\dag$ & 1322.7$^\ddag$ \\
    F. (\textsc{stdfglm}) & 7.22 & 8.29 & 21.0 & 164.3 \\
    \hline
    B.~(ours) & 5.23 & 5.46 & 7.05 & 7.91 \\
    \bottomrule \end{tabularx}
\end{table}

First, we demonstrate the efficiency of the proposed dataset generation framework. 
We constructed 16 datasets $\cD_n(k)$ for $n \in \{2,3,4,5\}$ and $k \in \{\bF_7, \bF_{31}, \bQ, \bR\}$ and measured the runtime of the forward generation and our backward generation.
The dataset $\cD_n(k)$ consists of 1,000 pairs of non-\gb set and \gb basis in $\ring$ of ideals in shape position. 
Each sample $(F, G) \in \cD_n(k)$ was prepared using Alg.~\ref{alg:dataset-generation} with $(d, d', s_{\max}, \prec) = (5, 3, n+2, \lexprec)$. The number of terms of univariate polynomials and $n$-variate polynomials is uniformly determined from $[1, 5]$ and $[1,2]$, respectively. When $k=\bQ$, the coefficient $a/b$ are restricted to those with $a,b \in \{-5, \ldots, 5\}$ for random polynomials and $a,b \in \{-100, \ldots, 100\}$ for polynomials in $F$. In the forward generation, one may first generate random polynomial sets and then compute their \gb bases. However, this leads to a dataset with a totally different complexity from that constructed by the backward generation, leading to an unfair runtime comparison between the two generation processes. 
As such, the forward generation instead computes \gb bases of the non-\gb sets given by the backward generation, leading to the identical dataset. We used SageMath~\citep{sagemath} with the libSingular backend.
 As Tab.~\ref{table:runtime-comparison} shows, our backward generation is a few orders of magnitude faster than the forward generation. A sharp runtime growth is observed in the forward generation as the number of variables increases. Note that these numbers only show the runtime on 1,000 samples, while training typically requires millions of samples. Therefore, the forward generation is almost infeasible, and the proposed method resolves a bottleneck in the learning of \gb basis computation.

\subsection{Learnability of \gb basis computation}
\begin{table*}[t]
  \centering
  \caption[hoge]{Accuracy [\%] / support accuracy [\%] of \gb basis computation by Transformer on $\cD_{n}^-(k)$. In the support accuracy, two polynomials are considered identical if they consist of an identical set of terms (i.e., identical \textit{support}), Transformers are trained on either discrete input embedding~(\textit{disc.}) and the hybrid embedding~(\textit{hyb.}). 
  Note that the datasets for $n=3, 4, 5$ are here constructed using $U_1, U_2'$ (cf.~Alg.~\ref{alg:dataset-generation}) with density $\sigma=0.6, 0.3, 0.2$, respectively.}
  \label{table:accuracy}
\begin{tabularx}{\linewidth}{ll*{4}{YY}YY}
    \toprule
    \multicolumn{2}{c}{\multirow{2}{*}{Coeff.}} & \multicolumn{4}{c|}{Shape position} & \multicolumn{2}{c}{Cauchy module} \\
    \cmidrule{3-6}\cmidrule{7-8}
    & & $n=2$ & $n=3$ & $n=4$ & $n=5$ & $n=2$ & $n=3$ \\
    \Xhline{2\arrayrulewidth}
    \multirow{2}{*}{$\bQ$} & \textit{disc.} & 93.7 / 95.4 & 88.7 / 92.0 & 90.8 / 94.0 & 86.5 / 90.6 & 99.7 / 99.8 & 97.2 / 97.6 \\
    & \textit{hyb.} & 66.8 / 87.3 & 69.0 / 89.8 & 62.7 / 86.8 &  0.0 / 84.9 & 98.3 / 99.7 & 80.1 / 89.2 \\
    \cline{1-8}
    \multirow{2}{*}{$\bF_{7}$} & \textit{disc.} & 72.3 / 79.1 & 78.1 / 83.2 & 71.3 / 84.6 & 84.3 / 88.5 & 98.7 / 99.8 & 98.1 / 98.7 \\
    & \textit{hyb.} & 54.1 / 78.7 & 55.8 / 84.3 & 46.1 / 81.8 & 54.4 / 81.5 & 95.8 / 99.7 & 80.8 / 91.2 \\
    \cline{1-8}
    \multirow{2}{*}{$\bF_{31}$} & \textit{disc.} & 46.8 / 77.3 & 50.2 / 80.9 & 51.1 / 83.7 & 28.6 / 77.9 & 93.8 / 99.7 & 94.7 / 99.6 \\
    & \textit{hyb.} &  6.1 / 75.3 &  5.8 / 80.4 &  0.1 / 73.0 &  0.1 / 76.9 & 15.1 / 99.5 & 10.9 / 98.4 \\
    \cline{1-8}
    $\bR$ & \textit{hyb.} & 57.2 / 85.0 & 61.0 / 88.0 & 61.7 / 87.5 & 45.6 / 82.9 & 28.3 / 100 &  4.3 / 100 \\
    \bottomrule \end{tabularx}
\end{table*}
We now demonstrate that Transformers can learn to compute \gb bases. 
To examine the general Transformer's ability, we focus on a standard architecture (e.g., 6 encoder/decoder layers and 8 attention heads) and a standard training setup~(e.g., the AdamW optimizer~\citep{loshchilov2018decoupled} with $(\beta_1, \beta_2) = (0.9, 0.999)$ and a linear decay of learning rate from $10^{-4}$). The batch size was set to 16, and models were trained for 8 epochs. We also tested the hybrid input embedding.  Refer to App.~\ref{sec:experiment-setup} for the complete information.
Each polynomial set in the datasets is converted into a sequence using the prefix representation and the separator tokens.
Unlike natural language processing, our task does not allow the truncation of an input sequence because the first term of the first polynomial in $F$ certainly relates to the last term of the last polynomial. 
To make the input sequence length manageable for vanilla Transformers, we used simpler datasets $\cD_{n}^-(k)$ using $U_1, U_2'$ in Alg.~\ref{alg:dataset-generation} of a moderate density $\sigma \in (0, 1]$. This makes the maximum sequence length less than 5,000. Specifically, we used $\sigma=1.0, 0.6, 0.3, 0.2$ for $n=2, 3, 4, 5$, respectively. The training set has one million samples, and the test set has one thousand samples. With hybrid input embedding, coefficients are predicted by regression, and we quantized them for $\bF_p$ and otherwise regarded them correct when the mean squared error is less than 0.1.

Table~\ref{table:accuracy} shows that trained Transformers successfully compute \gb bases with moderate/high accuracy.
Several intriguing observations below are obtained. See App.~\ref{sec:experiment-extra} for more results. Particularly, App.~\ref{sec:superior-cases} presents several examples found in the datasets for which Transformer successfully computed \gb bases significantly faster than math algorithms.
Table~\ref{table:accuracy} also includes the results on Cauchy module datasets on which Transformers are trained and tested. The dataset generation starts with sampling the roots in $k^n$, and the other parts follow the generation of $\cD_{n}^{-}(k)$. The results on $(\bQ, n=3)$ with standard embedding is not shown as it requires too many number tokens. 

\paragraph{The performance gap across the rings.} The accuracy shows that the learning is more successful on infinite field coefficients $k \in \{\bQ, \bR\}$ than finite field ones $k=\bF_p$. This may be a counter-intuitive observation because there are more possible coefficients in $G$ and $F$ for $\bQ$ than $\bF_p$. Specifically, for $G$, the coefficient $a/b \in \bQ$ is restricted to those with $a,b\in\{-5, \ldots, 5\}$ (i.e., roughly 50 choices), and $a,b\in\{-100, \ldots, 100\}$ (i.e., roughly 20,000 choices) for $F$. In contrast, there are only $p$ choices for $\bF_p$.
The performance even degrades for the larger order $p=31$. Interestingly, the support accuracy shows that the terms forming the polynomial (i.e., the \textit{support} of polynomial) are correctly identified well. Thus, Transformers have difficulty determining the coefficients in finite fields. Several studies have also reported that learning to solve a problem involving modular arithmetic may encounter some difficulties~\citep{power2022grokking,charton2024learning,gromov2023grokking}, but no critical workaround is known.

\paragraph{Incorrect yet reasonable failures.} 
We observed that the predictions by a Transformer are mostly reasonable even when they are incorrect. For example, only several coefficients may be incorrect, and the support can be correct as suggested by the relatively high support accuracy in Tab.~\ref{table:accuracy}. In such a case, one can use a \gb basis computation algorithm that works efficiently given the leading terms of the target unknown \gb basis~\citep{traverso1997}. Refer to App.~\ref{sec:success-failure-cases} for extensive lists of examples. 

\paragraph{Hybrid embedding.} Table~\ref{table:accuracy} shows that determining coefficients by regression is less successful than classifications. For infinite field $k$, this may be because of the accumulation of coefficient errors during the auto-regressive generation. 
Thus, the current best practice would be to prepare many number tokens in the vocabulary set, or a sophisticated regression-by-classification approach may be helpful~\citep{shah2023learning}. 
Note that the results for $k=\bF_p$ are shown for reference as the finite field elements do not have ordering. Figure~\ref{fig:embedding} shows a contrast between the embedding functions learned in infinite field and finite field. Particularly, the slice of distance matrix (ii) and that of the dot-product matrix (vi) show that these metrics align well with the difference between numbers in $\bR$. However, we cannot observe convincing patterns in the embedding in $\bF_{31}$. For the two-layer case in Fig.~\ref{fig:embedding}(a), we observe sharp changes around $\pm 5$ of the horizontal axis. This may be because of the gap in the coefficient range in the input and output space. The coefficients of $F$ ranges between $[-100, 100]$, while that of $G$ does between $[-5, 5]$. In Tab.~\ref{table:fE-implementation}, we show that the increase of hidden layers of $f_{\mathrm{E}}$ does not lead to improvement.

\begin{figure}
\centering
\includegraphics[width=\textwidth]{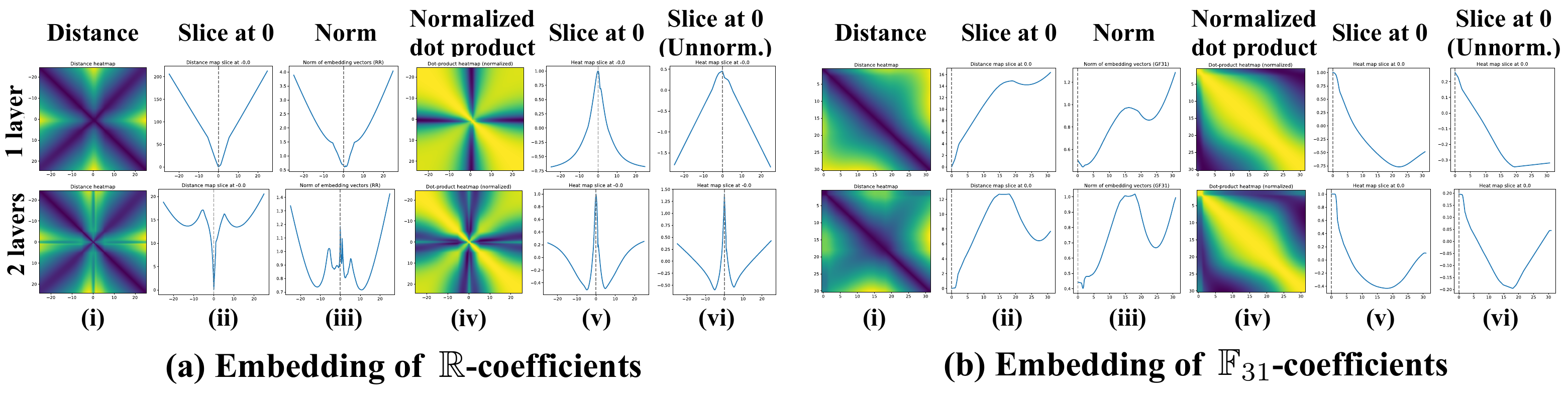}
\caption{Visual analysis of embedding vectors of numbers given by the proposed embedding. Embedding $c\in \bR$ to $f_{\mathrm{E}}(c) \in \bR^{D}$ from $c_{\min}$ to $c_{\max}$ with $B$ bins to obtain $M \in \bR^{B\times D}$, the fix figures show from the left, (i) the Euclidean distance matrix of $M$, (ii) its slice at $0$, (iii) the norm of embedding vectors, (iv) the dot product $\tilde{M}\tilde{M}^{\top}$ with $\tilde{M}$ of the row-normalized $M$, (v) $f_{\mathrm{E}}(0)^{\top}\tilde{M}$ and (vi) $f_{\mathrm{E}}(c_0)^{\top}M$. (a) Trained on $\bR[x_1, x_2]$; $(c_{\min}, c_{\max}) = (-100, 100)$. (b) Trained on $\bF_{31}[x_1, x_2]$; $(c_{\min}, c_{\max}) = (0, 31)$. The embedding layer $f_{\mathrm{E}}$ has one/two hidden layers (top/bottom rows). As can be seen, the relationship between embedding vectors in terms of distance and dot product is aligned well in the infinite field and not in the finite field.}
\label{fig:embedding}
\end{figure}

\section{Conclusion}
This study proposed the first learning approach to a fundamental algebraic task, the \gb basis computation. 
While various recent studies have reported the learnability of mathematical problems by Transformers, we addressed the first problem with nontriviality in the dataset generation. Ultimately, the learning approach may be useful to address large-scale problems that cannot be approached by \gb basis computation algorithms because of their computational complexity. Transformers can output predictions in moderate runtime. The outputs may be incorrect, but there is a chance of obtaining a hint of a solution, as shown in our experiments. 
We believe that our study reveals many interesting open questions to achieve \gb basis computation learning. Some are algebraic problems, and others are machine learning challenges, further discussed in Sec.~\ref{sec:open-Qs}.

\paragraph{Acknowledgement.} 
We would like to thank Masayuki Noro (Rikkyo University) for his fruitful comments on our dataset construction algorithm and Noriki Nishida (RIKEN Center for Advanced Intelligence Project) for his help in the implementation. Hiroshi Kera was supported by JST PRESTO Grant Number JPMJPR24K4, JST ACT-X Grant Number JPMJAX23C8, Mitsubishi Electric Information Technology R\&D Center, and the Chiba University IAAR Research Support Program and the Program for Forming Japan's Peak Research Universities (J-PEAKS). Yuki Ishihara was supported by
JSPS KAKENHI Grant Number JP22K13901 and Institute of Mathematics for Industry, Joint Usage/Research Center in Kyushu University (FY2023 Short-term Joint Research ``Speeding up of symbolic computation and its application to solving industrial problems'' (2023a006)). Yuta Kambe was supported by Mitsubishi Electric Research Associate Program.

\bibliographystyle{apalike}

\newpage
\appendix
\onecolumn
\appendix

\section{Basic Definitions in Algebra}\label{sec:basic-definitions}

\begin{definition}[Ring, Field~(\cite{atiyah1994introduction}, Chap. 1 \S1)] \label{def:ring}
    A set $R$ with an additive operation $+$ and a multiplicative operation $*$ is called a (commutative) ring if it satisfies the following conditions:
    \begin{enumerate}
        \item $a+(b+c)=(a+b)+c$ for any $a,b,c\in R$,
        \item there exists $0\in R$ such that $a+0=0+a=a$ for any $a\in R$,
        \item for any $a\in R$, there exists $-a$ such that $a+(-a)=(-a)+a=0$,
        \item $a+b=b+a$ for any $a,b\in R$,
        \item $a*(b*c)=(a*b)*c$ for any $a,b,c\in R$,
        \item there exists $1\in R$ such that $a*1=1*a=a$ for any $a\in R$,
        \item $a*(b+c)=a*b+a*c$ for any $a,b,c\in R$,
        \item $(a+b)*c=a*c+b*c$ for any $a,b,c\in R$,
        \item $a*b=b*a$ for any $a,b\in R$.
    \end{enumerate}
    A commutative ring $R$ is called a field if it satisfies the following condition 
    \begin{enumerate}\setcounter{enumi}{9}
        \item for any $a\in R\setminus\{0\}$, there exists $a^{-1}$ such that $a*a^{-1}=a^{-1}*a=1$. 
    \end{enumerate}
\end{definition}

\begin{definition}[Polynomial Ring~(\cite{atiyah1994introduction}, Chap.~1 \S1)]
    In Definition \ref{def:ring}, $\ring$, the set of all $n$-variate polynomials with coefficients in $k$, satisfies all  conditions (1)-(9). Thus, $\ring$ is called a polynomial ring. 
\end{definition}

\begin{definition}[Quotient Ring~(\cite{atiyah1994introduction}, Chap.~1 \S1)]
    Let $R$ be a ring and $I$ an ideal of $R$. For each $f\in R$, we set $[f]=\{g\in R\mid f-g\in I\}$. Then, the set $\{[f]\mid f\in R\}$ is called the quotient ring of $R$ modulo $I$ and denoted by $R/I$. Indeed, $R/I$ is a ring with an additive operation $+$ and a multiplicative operation $*$, where $[f]+[g]=[f+g]$ and $[f]*[g]=[f*g]$ for $f,g\in R$ respectively.
\end{definition}
\begin{definition}[Generators]
For $F=\qty{\fs}\subset \ring$, 
the following set 
    \begin{align}
        \langle F \rangle = \qty{\sum_{i=1}^s h_if_i \mid h_1, \ldots, h_s \in \ring}.
    \end{align}
is an ideal and said to be \textit{generated} by $F$, and 
$\fs$ are called \textit{generators}.
\end{definition}
\begin{definition}[0-dimensional ideal~(\cite{cox2015ideals}, Chap.~5 \S3, Thm.~6)]\label{def:0-dimensional-ideal-formal}
    Let $F$ be a set of polynomials in $\ring$. An ideal $\ideal{F}$ is called a \textit{0-dimensional ideal} if the $k$-linear space $\ring/\ideal{F}$ is finite-dimensional, where $\ring/\ideal{F}$ is the quotient ring of $\ring$ modulo $\ideal{F}$.
\end{definition}

\begin{definition}[Radical ideal~(\cite{atiyah1994introduction}, Chap. 1 \S1)] 
    For an ideal $I$ of $\ring$, the set $\{f\in \ring\mid f^m\in I \text{ for a positive integer $m$}\}$ is called the radical of $I$ and denoted by $\sqrt{I}$. Also, $I$ is called a radical ideal if $I=\sqrt{I}$. 
\end{definition}

\begin{definition}[Syzygy~(\cite{becker1993groebner}, Chap. 3, \S3)] Let $F = \qty{\fs} \subset \ring$. A \textit{syzygy} of $F$ is an $s$-tuple of polynomials $(q_1,\ldots, q_s) \in \ring^s$ such that $q_1f_1 + \cdots + q_sf_s = 0$.
\end{definition}
\begin{definition}[Term~(\cite{becker1993groebner}, Chap. 2, \S1)]
    For a polynomial $f=\sum_{\alpha_1,\ldots,\alpha_n} c_{\alpha_1,\ldots,\alpha_n} x_1^{\alpha_1}\cdots x_n^{\alpha_n}$ with $c_{\alpha_1,\ldots,\alpha_n}\in K$ and $\alpha_1,\ldots,\alpha_n\in \bZ_{\ge 0}$, each $x_1^{\alpha_1}\cdots x_n^{\alpha_n}$ is called a term in $f$. 
\end{definition}

\begin{definition}[Total Degree~(\cite{cox2015ideals}, Chap.~1 \S1, Def.~3)]
    For a term $x_1^{\alpha_1}\cdots x_n^{\alpha_n}$, its total degree is the sum of indices $\alpha_1+\cdots +  \alpha_n$. For a polynomial $f$, the total degree of $f$ is the maximal total degree of terms in $f$. 
\end{definition}

\begin{definition}[Term order~(\cite{becker1993groebner}, Definition 5.3)] A \textit{term order} $\prec$ is a relation between terms such that
\begin{enumerate}
    \item (comparability) for different terms $x_1^{\alpha_1}\cdots x_n^{\alpha_n}$ and $x_1^{\beta_1}\cdots x_n^{\beta_n}$, either $x_1^{\alpha_1}\cdots x_n^{\alpha_n}\prec x_1^{\beta_1}\cdots x_n^{\beta_n}$ or $x_1^{\beta_1}\cdots x_n^{\beta_n}\prec x_1^{\alpha_1}\cdots x_n^{\alpha_n}$ holds,
    \item (order-preserving) for terms $x_1^{\alpha_1}\cdots x_n^{\alpha_n}$, $x_1^{\beta_1}\cdots x_n^{\beta_n}$ and $x_1^{\gamma_1}\cdots x_n^{\gamma_n}\neq 1$, if $x_1^{\alpha_1}\cdots x_n^{\alpha_n}\prec x_1^{\beta_1}\cdots x_n^{\beta_n}$ then $x_1^{\alpha_1+\gamma_1}\cdots x_n^{\alpha_n+\gamma_n}\prec x_1^{\beta_1++\gamma_1}\cdots x_n^{\beta_n+\gamma_n}$ holds,
    \item (minimality of 1) the term $1$ is the smallest term i.e. $1 \prec x_1^{\alpha_1}\cdots x_n^{\alpha_n}$ for any term $x_1^{\alpha_1}\cdots x_n^{\alpha_n}\neq 1$.
\end{enumerate}
\end{definition}
\begin{example}
    The \textit{lexicographic order} $\lexprec$ prioritizes terms with larger exponents for the variables of small indices, e.g., 
\begin{align}
    x_2 \lexsucc x_3^2 \quad\text{and}\quad x_1 x_2 x_3^2 \lexprec x_1 x_2^2 x_3.
\end{align}
Two terms are first compared in terms of the exponent in $x_1$~(larger one is prioritized), and if a tie-break is needed, the next variable $x_2$ is considered, and so forth.
\end{example}
\begin{example}
    The \textit{graded lexicographic order} $\grlexprec$ prioritizes terms with higher total degree.\footnote{The total degree of term $x_1^{\alpha_1}\cdots x_n^{\alpha_n}$ refers to $\sum_{i=1}^n\alpha_i$. The total degree of polynomial $f$ refers to the maximum total degree of the terms in $f$.} For tie-break, the lexicographic order is used, e.g., 
\begin{align}
    1 \grlexprec x_n \quad\text{and}\quad x_2 \grlexprec x_3^2 \quad\text{and}\quad x_1 x_2 x_3^2 \grlexprec x_1 x_2^2 x_3.
\end{align}
\end{example}
Term orders prioritizing lower total degree terms as $\grlexprec$ are called graded term orders.
\begin{definition}[Reduced \gb basis]
    A $\prec$-\gb basis $G=\{g_1,\ldots,g_t\}$ of $I$ is called the reduced \gb basis of $I$ if 
    \begin{enumerate}
        \item the leading coefficient of $g_i$ with respect to $\prec$ is 1 for all $i=1,\ldots,t$,
        \item no terms of $g_i$ lies on $\ideal{\lt{G\setminus \{g_i\}}}$ for any $i=1,\ldots,t$.
    \end{enumerate}
\end{definition}

\begin{proposition}[\cite{ShapeLemma89}, Prop.~1.6; \cite{noro1999modular}, Lem.~4.4]~\label{prop:radial-then-shape-position}
Let $I$ be a 0-dimensional radical ideal.
If $k$ is of characteristic 0 or a finite field of large enough order,
then a random linear coordinate change puts I in shape position.
\end{proposition}

\section{Cauchy module}\label{sec:cauchy-module}
We here provide the definition of the Cauchy module, which defines another class of 0-dimensional ideals. 
\begin{definition}[Elementary symmetric polynomials] The elementary symmetric polynomials $s_1, \ldots, s_n$ in $n$ variables $\xs$ are 
\begin{align}
    s_k = \sum_{i_1 < \cdots < i_k} x_{i_1}\cdots x_{i_k}, \quad k=1, \ldots, n.
\end{align}
    
\end{definition}

\begin{definition}[Cauchy module] Let $S_n$ be the symmetric group on a finite set of size $n$. Let $k$ be an algebraically closed field, and let $\alpha = (\alpha_1, \cdots, \alpha_n) \in k^n$ be a generic point (i.e., $\alpha_i \ne \alpha_j$ for $i\ne j$). Let the finite subset $A \subset k^n$
\begin{align}
    A = \{\sigma(\alpha) = (\alpha_{\sigma(1)}, \ldots, \alpha_{\sigma(n)}) \mid \sigma \in S_n\}.
\end{align}
Let $f_1(t)$ be a polynomial of $t$, 
\begin{align}
    f_1(t) = t^2 - s_1(\alpha)t^{n-1} + s_2(\alpha)t^{n-2} - \cdots + (-1)^n s_n(\alpha) = \prod_{i=1}^n (t-\alpha_i).
\end{align}
Let us introduce indeterminates $z_1, \ldots, z_n$ and 
\begin{align}
    f_{2}\left(z_{2}, z_{1}\right) &=\frac{f_{1}\left(z_{2}\right)-f_{1}\left(z_{1}\right)}{z_{2}-z_{1}} \\ 
    f_{3}\left(z_{3}, z_{2}, z_{1}\right) &=\frac{f_{2}\left(z_{3}, z_{1}\right)-f_{2}\left(z_{2}, z_{1}\right)}{z_{3}-z_{2}} \\ 
    &\ \  \vdots \\ 
    f_{n}\left(z_{n}, \ldots, z_{1}\right) &=\frac{f_{n-1}\left(z_{n}, z_{n-2}, \ldots, z_{1}\right)-f_{n-1}\left(z_{n-1}, z_{n-2}, \ldots, z_{1}\right)}{z_{n}-z_{n-1}} \\ (& \left.=z_{1}+\cdots+z_{n}-s_{1}(\alpha)\right)
\end{align}
The set of polynomials $C = \{\fs\}$ is called the Cauchy module.
\end{definition}

\begin{remark}
The Cauchy module is the reduced $\lexprec$-\gb basis of $\ideal{C}$ with $z_1 \lexprec \cdots \lexprec z_n$.    
\end{remark}

\section{Dataset generation algorithm}\label{sec:proofs}

\begin{algorithm}[t]
\SetKwInput{Assumption}{Assumption}
\SetKwInput{Input}{Input}
\SetKwInput{Output}{Output}
\SetKwComment{Comment}{$\triangleright$\ }{}
\Assumption{polynomial ring $\ring$}
\Input{dataset size $m$, maximum degrees $d, d'$, maximum size of non-\gb set $s_{\max}\ge n$, and term order $\prec$.}
\Output{collection $\cD = \{(F_i, G_i)\}_{i=1}^m$ of non-\gb set $F_i \in \ring^m$ and the reduced $\prec$-\gb basis $G_i \subset  \ring$ of 0-dimensional ideal $\ideal{F} = \ideal{G}$.}
\vspace{1mm}
\hrule
\vspace{2mm}
{$\cD \leftarrow \{\ \}$\\}
\For{$i = 1, \ldots, m$}{
    {$G_i \leftarrow \{h\}$ with $h$, non-constant, monic/primitive, sampled from $k[x_n]_{\le d}$. \Comment*[f]{Prob.~\ref{problem:random-GB}}\\}
    \For{$j = 1, \ldots, n-1$}{
        {$G_i \leftarrow G_i \cup \{g_j\}$ with $g_j$ sampled from $k[x_n]_{\le \deg(h)-1}$.\\}
        }
    {$s \sim \bU[n, s_{\max}]$ \Comment*[f]{Prob.~\ref{problem:backward-GB}}\\}
    {Sample a unimodular upper-triangular matrix $U_1\in\mathrm{ST}(s,\ring_{\le d'})$.\\}
    {Sample a unimodular upper-triangular matrix $U_2'\in\mathrm{ST}(n,\ring_{\le d'})$.\\}
    {Sample a permutation matrix $P \in \{0,1\}^{s\times s}$\\}
    {$F_{i} \leftarrow U_1PU_2G_i$, where $U_2 = [{U_2'}^{\top}\ \  O_{n, s-n}]^{\top} \in \ring^{s\times n}$.\\}
    \If{$\prec\ \ne \lexprec$}{
        {$G_i \leftarrow \mathrm{FGLM}(G_i, \lexprec, \prec)$}
    }
    {$\cD \leftarrow \cD \cup \{(F_{i},G_{i})\}$\Comment*[f]{Reorder terms in $F_i$ if $\prec \ne \lexprec$.}}
}
\caption{Dataset generation for learning to compute 0-dimensional \gb bases.}\label{alg:dataset-generation}
\end{algorithm}

\backward*
\begin{proof}\ \newline
    (1) In general, if an ideal $I$ is generated by $s$ elements and $s < n$, then the Krull dimension of $k[x_1,\ldots,x_n]/I$ satisfies that $\dim k[x_1,\ldots,x_n]/I \geq n - s > 0$ (Krull's principal ideal theorem~\citep[\S 10]{Eisenbud13}). Since the Krull dimension of $k[x_1,\ldots,x_n]/\langle G \rangle$ is 0, we have $s \geq n$.
    
    \noindent(2) From $F = AG$, we have $\langle F\rangle \subset \langle G\rangle$. If $A$ has a left-inverse $B\in\ring^{t \times s}$, we have $BF = BAG = G$, indicating $\langle F\rangle \supset \langle G\rangle$. Therefore, we have $\langle F\rangle = \langle G\rangle$.
    
    \noindent(3) If the equality $\ideal{F} = \ideal{G}$ holds, then there exists a $t \times s$ matrix $B \in \ring^{t \times s}$ such that $G = BF$. Since $F$ is defined as $F = AG$, we have $G = BF = BAG$ and $G = E_t G$ in $\ring^{t}$. Therefore we obtain $(BA-E_t)G = 0$. In particular, each row of $BA-E_t$ is a syzygy of $G$. Conversely, if there exists a $t \times s$ matrix $B \in \ring^{t \times s}$ such that each row of $BA - E_t$ is a syzygy of $G$, then we have $(BA-E_t)G = 0$ in $\ring^t$, therefore the equality $\ideal{F} = \ideal{G}$ holds since we have $G = E_tG = BAG = BF$.
\end{proof}

\backwardSq*
\begin{proof}
    From the Cramer's rule, there exists $B\in \ring^{n\times n}$ such that $BA=\det(A)E_n$, where $E_n$ denotes the $n$-by-$n$ identity matrix. Indeed, the $i$-th row $B_i$ of $B$ satisfies for $i=1, \ldots, n$,
    \begin{align}
        B_i = \frac{1}{\det(A)}\mqty( \det(\tilde{A}_1^{(i)}), \ldots , \det(\tilde{A}_n^{(i)})),
    \end{align}
    where $\tilde{A}_j^{(i)}$ is the matrix $A$ with the $j$-th column replaced by the $i$-th canonical basis $(0, ..., 1, ..., 0)^{\top}$. 
    Since $\det(A)$ is a non-zero constant, $A$ has the left-inverse $B$ in $\ring^{n\times n}$. Thus $\ideal{F} = \ideal{G}$ from Thm.~\ref{thm:backward}.
\end{proof}

\datasetGeneration*
\begin{proof}
    Outside of the Zariski subset part, statements 1--3 are trivial from Alg.~\ref{alg:dataset-generation} and the discussion in Sec.s~\ref{sec:random-GB} and \ref{sec:backward-GB}. 
    To obtain the desired Zariski subsets, we
    consider the vector space of polynomials of degree $d+2d'$ or less.
    We remark that if $F_i$ is a $\prec$-\gb basis, its leading
    terms must belong to a finite amount of possibilities.
    For a polynomial to have a given term as its leading term,
    zero conditions on terms greater than this term are needed,
    defining a closed Zariski subset condition.
    By considering the finite union of all these conditions, we obtain the desired result.

To obtain one pair $(F, G)$, the random generation of $G$ needs $\cO(nS_{1,d})$, and the backward transform from $G$ to $F$ needs $\cO(s^2S_{n,d'})$ to get $U_1, U_2$ and $(n^2 + s^2)M_{n, 2d'+d})$ for the multiplication $F = U_1PU_2G$. Note that the maximum total degree of polynomials in $F$ is $2d'+d$.
\end{proof}

\section{Hybrid Input Embedding}\label{sec:hybrid-input-embedding-app}
We here present the supplemental information of Transformers with hybrid input embedding. Let $\bm{s} = [s_1, \ldots, s_L]$ to be a sequence of tokens. Some of these tokens are in $\cV$ and otherwise in $\bR$. We call the former discrete tokens and the latter continuous tokens. For discrete tokens, the standard input embedding based on the 
embedding matrix is applied. 
For continuous tokens, a small feed-forward network $f_{\mathrm{E}}: \bR \to \bR^D$ is applied. Unlike discrete tokens, continuous tokens are predicted by regression. For this sake, Transformers should equip a regression head, and they solve a classification task and a regression task simultaneously. In the classification task, continuous tokens are all replaced with a single coefficient token. In other words, the classification head predicts the support of the polynomials, while the regression head predicts the coefficients to be filled in the coefficient tokens. The auto-regressive generation process is naturally induced by a standard method. 

In our experiments, we implemented $f_{\mathrm{E}}$ by one-hidden layer ReLU Network, i.e., 
\begin{align}
  f_{\mathrm{E}}(x) =  W_2\varphi(\bm{w}_1 x + \bm{b}_1) + \bm{b}_2,
\end{align}
where $\bm{w}_1, \bm{b_1} \in \bR^D, W_2\in\bR^{D\times D}, \bm{b}_2 \in \bR^D$ and $\varphi$ is the ReLU function applied entry-wise. We also tried $f_{\mathrm{E}}$ with one more hidden layer. However, this only has a minor improvement on the $\bR$ case; see Tab.~\ref{table:fE-implementation}.

\begin{table}[!t]
  \centering
    \caption{Comparison of implementations of continuous input embedding $f_{\mathrm{E}}$ on $\cD_2^{-}(k)$.}
  \label{table:fE-implementation}
  \begin{tabularx}{\linewidth}{l*{3}{Y}*{1}{Y}}
    \toprule
    Coefficient & $\bQ$ & $\bF_7$ & $\bF_{31}$ & $\bR$\\
    \Xhline{2\arrayrulewidth}
    one hidden layer & 66.8 / 87.3 & 54.1 / 78.7 & 6.1 / 75.3 & 57.2 / 85.0\\
    two hidden layers & 67.2 / 87.7 & 54.3 / 78.8 & 5.6 / 75.7 & 56.2 / 83.6 \\
    \bottomrule %
  \end{tabularx}
\end{table}

\section{Training Setup}\label{sec:experiment-setup}
This section provides the supplemental information of our experiments presented in Sec.~\ref{sec:experiments}. 

\subsection{\gb basis computation algorithms}
In Tab.~\ref{table:runtime-comparison}, we tested three algorithms provided in SageMath with the libSingular backend for forward generation.
\paragraph{\textsc{std} (\texttt{libsingular:std}):} The standard Buchberger algorithm.
\paragraph{\textsc{slimgb} (\texttt{libsingular:slimgb}):} A variant of the Faug\`ere's F4 algorithm. Refer to \citep{brickenstein2010slimgb}.
\paragraph{\textsc{stdfglm} (\texttt{libsingular:stdfglm}):} Fast computation using \textsc{std} with the graded reverse lexicographic order followed by the FGLM for the change of term orders. Only for 0-dimensional cases.

\subsection{Training setup}
\paragraph{Dataset.} Both training and test samples are generated using our method. It involves sampling of random polynomials. The degree and the support size (the number of terms) of them as well as coefficients are restricted by user-defined bounds (see Sec.~\ref{sec:dataset-generation}). Let $d_{\max}, \mu_{\max}$ be the maximum degree and the maximum support size. A random polynomial is obtained as the sum of $\mu \sim \bU[1, \mu_{\max}]$ monomials uniformly and randomly sampled from $\ring_{\le d_{\max}}$. When the samples are fed to a Transformer, polynomials are tokenized into an infix representation. For example, $\{x^2 - 1/2y, y\} \subset \bQ[x, y]$ is tokenized to $[\texttt{C1, E2, E0, +, C-1, /, C2, E0, E1, <sep>, C1, E0, E1}]$. 
\paragraph{Training.} We used a Transformer model~\citep{vaswani2017attention} with a standard architecture: 6 encoder/decoder layers, 8 attention heads, token embedding dimension of 512 dimensions, and feed-forward networks with 2048 inner dimensions. The absolute positional embedding is learned from scratch. The dropout rate was set to 0.1. 
We used the AdamW optimizer~\citep{loshchilov2018decoupled} with $(\beta_1, \beta_2) = (0.9, 0.999)$ with no weight decay. The learning rate was initially set to $10^{-4}$ and then linearly decayed over training steps. All training samples are visited in a single epoch, and the total number of epochs was set to 8. The batch size was set to 16. 
At the inference time, output sequences are generated using a beam search with width 1.
For the hybrid input embedding, we used a ReLU network with one hidden layer (cf.~Sec.~\ref{sec:hybrid-input-embedding-app}). A model with this embedding predicts coefficients as continuous values, and the mean-squared loss with weight 0.01 is additionally used for the training. 
Note that while the exponents are also numbers, we treat them as discrete tokens because they are always discrete and their range is moderate. 

\section{Additional Experimental Results.}\label{sec:experiment-extra}
We provide the additional experimental results with Transformers with the standard input embedding. 
\subsection{Dataset generation}

\begin{table}[!t]
  \centering
  \caption{Runtime comparison (in seconds) of forward generation~(F.) and backward generation~(B.) of dataset $\cD_{n}(k)$ of size 1,000. The forward generation used either of the three algorithms provided in SageMath with the libSingular backend. For $\cD^-_n(k)$, the proposed method is not necessarily the fastest. Note that the runtime of the forward methods does not include the sampling of $F$, and $F$ is given from the datasets constructed by the backward method. The sampling step roughly consists of 30\% of the runtime in the backward method. It is also worth noting that sampling of overdetermined $F$ generally leads to a trivial ideal with the Gröbner basis $\{1\}$.}
  \label{table:runtime-comparison-full}
  \begin{tabularx}{\linewidth}{l||*{3}{Y|}*{1}{Y||}*{3}{Y|}*{1}{Y}}
    \toprule
    \multirow{2}{*}{Method} & \multicolumn{4}{c||}{$\cD_n(\bQ)$} & \multicolumn{3}{c}{$\cD_n^-(\bQ)$} \\
    \cline{2-9}
    & $n=2$ & $n=3$ & $n=4$ & $n=5$ & $n=2$ & $n=3$ & $n=4$ & $n=5$ \\
    & & & & & $\sigma=1.0$ & $\sigma=0.6$ & $\sigma=0.3$ & $\sigma=0.2$ \\
    \Xhline{2\arrayrulewidth}
    F. (\textsc{std}) & \textbf{4.20} & 216.3 & 740.1 & 1411.1 & \textbf{4.20} & 104.3 & 101.0 & 117.4 \\
    F. (\textsc{slimgb}) & 4.29 & 183.4 & 697.5 & 1322.7 & 4.29 & 77.1 & 98.9 & 134.5 \\
    F. (\textsc{stdfglm}) & 7.22 & 8.29 & 21.0 & 164.3 & 7.22 & 12.1 & 9.75 & 14.9 \\
    \hline
    B.~(ours) & 5.23 & \textbf{5.46} & \textbf{7.05} & \textbf{7.91} & 5.23 & \textbf{11.2} & \textbf{7.85} & \textbf{13.7} \\
    \hline 
    \bottomrule 
    \multirow{2}{*}{Method} & \multicolumn{4}{c||}{$\cD_n(\bF_7)$} & \multicolumn{3}{c}{$\cD_n^-(\bF_7)$} \\
    \cline{2-9}
    & $n=2$ & $n=3$ & $n=4$ & $n=5$ & $n=2$ & $n=3$ & $n=4$ & $n=5$ \\
    & & & & & $\sigma=1.0$ & $\sigma=0.6$ & $\sigma=0.3$ & $\sigma=0.2$ \\
    \Xhline{2\arrayrulewidth}
    F. (\textsc{std}) & 4.93 & \textbf{4.57} & 818.9 & 2123.3 & 4.93 & \textbf{4.30} & 48.8 & 91.5 \\
    F. (\textsc{slimgb}) & \textbf{4.92} & 5.57 & 561.0 & 1981.2 & \textbf{4.92} & 4.65 & 32.9 & 81.8 \\
    F. (\textsc{stdfglm}) & 8.02 & 6.33 & \textbf{9.20} & 62.6 & 8.02 & 7.50 & \textbf{7.25} & \textbf{7.46} \\
    \hline
    B.~(ours) & 6.79 & 8.36 & 10.5 & \textbf{14.2} & 6.79 & 8.72 & 10.5 & 14.5 \\
    \hline 
    \bottomrule 
    \multirow{2}{*}{Method} & \multicolumn{4}{c||}{$\cD_n(\bF_{31})$} & \multicolumn{3}{c}{$\cD_n^-(\bF_{31})$} \\
    \cline{2-9}
    & $n=2$ & $n=3$ & $n=4$ & $n=5$ & $n=2$ & $n=3$ & $n=4$ & $n=5$ \\
    & & & & & $\sigma=1.0$ & $\sigma=0.6$ & $\sigma=0.3$ & $\sigma=0.2$ \\
    \Xhline{2\arrayrulewidth}
    F. (\textsc{std}) & 5.08 & \textbf{5.04} & 777.6 & 2110.4 & 5.08 & \textbf{4.39} & 20.6 & 114.0 \\
    F. (\textsc{slimgb}) & \textbf{5.07} & 6.91 & 664.2 & 2026.0 & \textbf{5.07} & 4.98 & 22.2 & 103.2 \\
    F. (\textsc{stdfglm}) & 8.10 & 6.73 & \textbf{9.14} & 80.2 & 8.10 & 6.95 & \textbf{7.23} & \textbf{8.58} \\
    \hline
    B.~(ours) & 7.40 & 8.37 & 10.5 & \textbf{14.7} & 7.40 & 18.0 & 9.91 & 15.3 \\
    \bottomrule
  \end{tabularx}
\end{table}

\begin{table}[!t]
  \centering
  \caption{Success rate [\%] of forward generation with the five-second timeout limit.}
  \label{table:forward-generation-success-rate}
\begin{tabularx}{\linewidth}{l||*{3}{Y|}*{1}{Y||}*{3}{Y|}*{1}{Y}}
    \toprule
    \multirow{2}{*}{Method} & \multicolumn{4}{c||}{$\cD_n(\bQ)$} & \multicolumn{3}{c}{$\cD_n^-(\bQ)$} \\
    \cline{2-9}
    & $n=2$ & $n=3$ & $n=4$ & $n=5$ & $n=2$ & $n=3$ & $n=4$ & $n=5$ \\
    & & & & & $\sigma=1.0$ & $\sigma=0.6$ & $\sigma=0.3$ & $\sigma=0.2$ \\
    \Xhline{2\arrayrulewidth}
    F. (\textsc{std}) & 100.0 & 96.0 & 85.5 & 72.2 & 100.0 & 98.3 & 98.2 & 97.9 \\
    F. (\textsc{slimgb}) & 100.0 & 96.6 & 86.4 & 74.4 & 100.0 & 98.7 & 98.3 & 97.6 \\
    F. (\textsc{stdfglm}) & 100.0 & 100.0 & 99.9 & 98.4 & 100.0 & 100.0 & 100.0 & 100.0 \\
    \hline 
    \bottomrule 
    \multirow{2}{*}{Method} & \multicolumn{4}{c||}{$\cD_n(\bF_7)$} & \multicolumn{3}{c}{$\cD_n^-(\bF_7)$} \\
    \cline{2-9}
    & $n=2$ & $n=3$ & $n=4$ & $n=5$ & $n=2$ & $n=3$ & $n=4$ & $n=5$ \\
    & & & & & $\sigma=1.0$ & $\sigma=0.6$ & $\sigma=0.3$ & $\sigma=0.2$ \\
    \Xhline{2\arrayrulewidth}
    F. (\textsc{std}) & 100.0 & 100.0 & 84.8 & 58.7 & 100.0 & 100.0 & 99.3 & 98.4 \\
    F. (\textsc{slimgb}) & 100.0 & 100.0 & 91.6 & 62.9 & 100.0 & 100.0 & 99.7 & 98.7 \\
    F. (\textsc{stdfglm}) & 100.0 & 100.0 & 100.0 & 100.0 & 100.0 & 100.0 & 100.0 & 100.0 \\
    \hline
    \bottomrule 
    \multirow{2}{*}{Method} & \multicolumn{4}{c||}{$\cD_n(\bF_{31})$} & \multicolumn{3}{c}{$\cD_n^-(\bF_{31})$} \\
    \cline{2-9}
    & $n=2$ & $n=3$ & $n=4$ & $n=5$ & $n=2$ & $n=3$ & $n=4$ & $n=5$ \\
    & & & & & $\sigma=1.0$ & $\sigma=0.6$ & $\sigma=0.3$ & $\sigma=0.2$ \\
    \Xhline{2\arrayrulewidth}
    F. (\textsc{std}) & 100.0 & 100.0 & 86.0 & 59.2 & 100.0 & 100.0 & 99.7 & 98.0 \\
    F. (\textsc{slimgb}) & 100.0 & 100.0 & 89.6 & 62.3 & 100.0 & 100.0 & 99.8 & 98.5 \\
    F. (\textsc{stdfglm}) & 100.0 & 100.0 & 100.0 & 100.0 & 100.0 & 100.0 & 100.0 & 100.0 \\
    \hline
    \bottomrule
\end{tabularx}
\end{table}

\begin{table}[!t]
  \centering
  \caption{Dataset profiles. The standard deviation is shown in the superscript. }
  \label{table:dataset-profile}
  \tiny
  \begin{tabularx}{\linewidth}{l||*{3}{Y|}*{1}{Y||}*{3}{Y|}*{1}{Y}}
    \toprule
    \multirow{2}{*}{Metric} & \multicolumn{4}{c||}{$\cD_n(\bQ)$} & \multicolumn{3}{c}{$\cD_n^-(\bQ)$} \\
    \cline{2-9}
    & $n=2$ & $n=3$ & $n=4$ & $n=5$ & $n=2$ & $n=3$ & $n=4$ & $n=5$ \\
    & & & & & $\sigma=1.0$ & $\sigma=0.6$ & $\sigma=0.3$ & $\sigma=0.2$ \\
    \Xhline{2\arrayrulewidth}
    Size of $F$ & 2.57$^{(\pm 0.71)}$ & 3.46$^{(\pm 0.66)}$ & 4.40$^{(\pm 0.62)}$ & 5.37$^{(\pm 0.60)}$ & 2.57$^{(\pm 0.71)}$ & 3.71$^{(\pm 0.77)}$ & 4.86$^{(\pm 0.80)}$ & 5.90$^{(\pm 0.81)}$ \\
    Max degree in $F$ & 7.31$^{(\pm 1.91)}$ & 8.54$^{(\pm 1.44)}$ & 9.02$^{(\pm 1.26)}$ & 9.17$^{(\pm 1.22)}$ & 7.31$^{(\pm 1.91)}$ & 8.20$^{(\pm 1.62)}$ & 8.34$^{(\pm 1.53)}$ & 8.46$^{(\pm 1.46)}$ \\
    Min degree in $F$ & 4.09$^{(\pm 1.93)}$ & 4.45$^{(\pm 1.92)}$ & 4.75$^{(\pm 1.89)}$ & 4.96$^{(\pm 1.85)}$ & 4.09$^{(\pm 1.93)}$ & 3.96$^{(\pm 2.00)}$ & 3.54$^{(\pm 2.09)}$ & 3.38$^{(\pm 2.08)}$ \\
    \# of terms in $F$ & 15.46$^{(\pm 7.67)}$ & 23.86$^{(\pm 7.97)}$ & 33.18$^{(\pm 8.25)}$ & 42.70$^{(\pm 8.74)}$ & 15.46$^{(\pm 7.67)}$ & 24.36$^{(\pm 9.15)}$ & 32.36$^{(\pm 10.39)}$ & 40.32$^{(\pm 11.38)}$ \\
    \gb ratio & 0.001$^{(\pm 0.026)}$ & 0$^{(\pm 0)}$ & 0$^{(\pm 0)}$ & 0$^{(\pm 0)}$ & 0.001$^{(\pm 0.026)}$ & 0$^{(\pm 0.012)}$ & 0$^{(\pm 0)}$ & 0$^{(\pm 0)}$ \\
    \hline
    Size of $G$ & 2.00$^{(\pm 0)}$ & 3.00$^{(\pm 0)}$ & 4.00$^{(\pm 0)}$ & 5.00$^{(\pm 0)}$ & 2.00$^{(\pm 0)}$ & 3.00$^{(\pm 0)}$ & 4.00$^{(\pm 0)}$ & 5.00$^{(\pm 0)}$ \\
    Max degree in $G$ & 4.00$^{(\pm 1.32)}$ & 4.00$^{(\pm 1.32)}$ & 4.00$^{(\pm 1.32)}$ & 4.00$^{(\pm 1.32)}$ & 4.00$^{(\pm 1.32)}$ & 4.00$^{(\pm 1.32)}$ & 4.00$^{(\pm 1.32)}$ & 4.00$^{(\pm 1.32)}$ \\
    Min degree in $G$ & 2.47$^{(\pm 1.23)}$ & 2.07$^{(\pm 1.14)}$ & 1.79$^{(\pm 1.02)}$ & 1.60$^{(\pm 0.90)}$ & 2.47$^{(\pm 1.23)}$ & 2.07$^{(\pm 1.14)}$ & 1.79$^{(\pm 1.02)}$ & 1.60$^{(\pm 0.90)}$ \\
    \# of terms in $G$ & 6.46$^{(\pm 2.33)}$ & 8.93$^{(\pm 3.25)}$ & 11.40$^{(\pm 4.13)}$ & 13.86$^{(\pm 4.99)}$ & 6.46$^{(\pm 2.33)}$ & 8.93$^{(\pm 3.24)}$ & 11.39$^{(\pm 4.13)}$ & 13.87$^{(\pm 4.99)}$ \\
    \gb ratio & 1$^{(\pm 0)}$ & 1$^{(\pm 0)}$ & 1$^{(\pm 0)}$ & 1$^{(\pm 0)}$ & 1$^{(\pm 0)}$ & 1$^{(\pm 0)}$ & 1$^{(\pm 0)}$ & 1$^{(\pm 0)}$ \\
    \hline 
    \bottomrule 
    \multirow{2}{*}{Metric} & \multicolumn{4}{c||}{$\cD_n(\bF_{7})$} & \multicolumn{3}{c}{$\cD_n^-(\bF_{7})$} \\
    \cline{2-9}
    & $n=2$ & $n=3$ & $n=4$ & $n=5$ & $n=2$ & $n=3$ & $n=4$ & $n=5$ \\
    & & & & & $\sigma=1.0$ & $\sigma=0.6$ & $\sigma=0.3$ & $\sigma=0.2$ \\
    \Xhline{2\arrayrulewidth}
    Size of $F$ & 3.00$^{(\pm 0.82)}$ & 4.00$^{(\pm 0.82)}$ & 5.00$^{(\pm 0.82)}$ & 6.00$^{(\pm 0.82)}$ & 3.00$^{(\pm 0.82)}$ & 4.00$^{(\pm 0.82)}$ & 5.00$^{(\pm 0.82)}$ & 6.00$^{(\pm 0.82)}$ \\
    Max degree in $F$ & 7.91$^{(\pm 2.04)}$ & 8.45$^{(\pm 1.67)}$ & 8.43$^{(\pm 1.55)}$ & 8.51$^{(\pm 1.47)}$ & 7.91$^{(\pm 2.04)}$ & 8.45$^{(\pm 1.67)}$ & 8.43$^{(\pm 1.55)}$ & 8.51$^{(\pm 1.47)}$ \\
    Min degree in $F$ & 4.37$^{(\pm 2.06)}$ & 4.15$^{(\pm 2.07)}$ & 3.64$^{(\pm 2.13)}$ & 3.44$^{(\pm 2.11)}$ & 4.37$^{(\pm 2.06)}$ & 4.15$^{(\pm 2.07)}$ & 3.64$^{(\pm 2.13)}$ & 3.44$^{(\pm 2.11)}$ \\
    \# of terms in $F$ & 19.88$^{(\pm 9.62)}$ & 27.56$^{(\pm 10.42)}$ & 34.02$^{(\pm 11.07)}$ & 41.50$^{(\pm 11.90)}$ & 19.88$^{(\pm 9.62)}$ & 27.56$^{(\pm 10.42)}$ & 34.02$^{(\pm 11.07)}$ & 41.50$^{(\pm 11.90)}$ \\
    \gb ratio & 0.002$^{(\pm 0.045)}$ & 0$^{(\pm 0.011)}$ & 0$^{(\pm 0)}$ & 0$^{(\pm 0)}$ & 0.002$^{(\pm 0.045)}$ & 0$^{(\pm 0.011)}$ & 0$^{(\pm 0)}$ & 0$^{(\pm 0)}$ \\
    \hline
    Size of $G$ & 2.00$^{(\pm 0)}$ & 3.00$^{(\pm 0)}$ & 4.00$^{(\pm 0)}$ & 5.00$^{(\pm 0)}$ & 2.00$^{(\pm 0)}$ & 3.00$^{(\pm 0)}$ & 4.00$^{(\pm 0)}$ & 5.00$^{(\pm 0)}$ \\
    Max degree in $G$ & 3.94$^{(\pm 1.34)}$ & 3.93$^{(\pm 1.34)}$ & 3.93$^{(\pm 1.34)}$ & 3.94$^{(\pm 1.34)}$ & 3.94$^{(\pm 1.34)}$ & 3.93$^{(\pm 1.34)}$ & 3.93$^{(\pm 1.34)}$ & 3.94$^{(\pm 1.34)}$ \\
    Min degree in $G$ & 2.39$^{(\pm 1.22)}$ & 1.98$^{(\pm 1.11)}$ & 1.72$^{(\pm 0.97)}$ & 1.53$^{(\pm 0.84)}$ & 2.39$^{(\pm 1.22)}$ & 1.98$^{(\pm 1.11)}$ & 1.72$^{(\pm 0.97)}$ & 1.53$^{(\pm 0.84)}$ \\
    \# of terms in $G$ & 6.32$^{(\pm 2.33)}$ & 8.70$^{(\pm 3.23)}$ & 11.08$^{(\pm 4.10)}$ & 13.47$^{(\pm 4.94)}$ & 6.32$^{(\pm 2.33)}$ & 8.70$^{(\pm 3.23)}$ & 11.08$^{(\pm 4.10)}$ & 13.47$^{(\pm 4.94)}$ \\
    \gb ratio & 1$^{(\pm 0)}$ & 1$^{(\pm 0)}$ & 1$^{(\pm 0)}$ & 1$^{(\pm 0)}$ & 1$^{(\pm 0)}$ & 1$^{(\pm 0)}$ & 1$^{(\pm 0)}$ & 1$^{(\pm 0)}$ \\
    \hline 
    \bottomrule 
    \multirow{2}{*}{Metric} & \multicolumn{4}{c||}{$\cD_n(\bF_{31})$} & \multicolumn{3}{c}{$\cD_n^-(\bF_{31})$} \\
    \cline{2-9}
    & $n=2$ & $n=3$ & $n=4$ & $n=5$ & $n=2$ & $n=3$ & $n=4$ & $n=5$ \\
    & & & & & $\sigma=1.0$ & $\sigma=0.6$ & $\sigma=0.3$ & $\sigma=0.2$ \\
    \Xhline{2\arrayrulewidth}
    Size of $F$ & 3.00$^{(\pm 0.82)}$ & 4.00$^{(\pm 0.82)}$ & 5.00$^{(\pm 0.82)}$ & 6.00$^{(\pm 0.82)}$ & 3.00$^{(\pm 0.82)}$ & 4.00$^{(\pm 0.82)}$ & 5.00$^{(\pm 0.82)}$ & 6.00$^{(\pm 0.82)}$ \\
    Max degree in $F$ & 8.11$^{(\pm 2.02)}$ & 8.65$^{(\pm 1.65)}$ & 8.62$^{(\pm 1.54)}$ & 8.69$^{(\pm 1.46)}$ & 8.11$^{(\pm 2.02)}$ & 8.65$^{(\pm 1.65)}$ & 8.62$^{(\pm 1.54)}$ & 8.69$^{(\pm 1.46)}$ \\
    Min degree in $F$ & 4.55$^{(\pm 2.06)}$ & 4.33$^{(\pm 2.08)}$ & 3.81$^{(\pm 2.16)}$ & 3.61$^{(\pm 2.15)}$ & 4.55$^{(\pm 2.06)}$ & 4.33$^{(\pm 2.08)}$ & 3.81$^{(\pm 2.16)}$ & 3.61$^{(\pm 2.15)}$ \\
    \# of terms in $F$ & 20.46$^{(\pm 9.74)}$ & 28.36$^{(\pm 10.52)}$ & 35.00$^{(\pm 11.19)}$ & 42.69$^{(\pm 12.00)}$ & 20.46$^{(\pm 9.74)}$ & 28.36$^{(\pm 10.52)}$ & 35.00$^{(\pm 11.19)}$ & 42.69$^{(\pm 12.00)}$ \\
    \gb ratio & 0$^{(\pm 0.017)}$ & 0$^{(\pm 0.009)}$ & 0$^{(\pm 0.001)}$ & 0$^{(\pm 0)}$ & 0$^{(\pm 0.017)}$ & 0$^{(\pm 0.009)}$ & 0$^{(\pm 0.001)}$ & 0$^{(\pm 0)}$ \\
    \hline
Size of $G$ & 2.00$^{(\pm 0)}$ & 3.00$^{(\pm 0)}$ & 4.00$^{(\pm 0)}$ & 5.00$^{(\pm 0)}$ & 2.00$^{(\pm 0)}$ & 3.00$^{(\pm 0)}$ & 4.00$^{(\pm 0)}$ & 5.00$^{(\pm 0)}$ \\
    Max degree in $G$ & 4.07$^{(\pm 1.31)}$ & 4.07$^{(\pm 1.31)}$ & 4.06$^{(\pm 1.31)}$ & 4.07$^{(\pm 1.31)}$ & 4.07$^{(\pm 1.31)}$ & 4.07$^{(\pm 1.31)}$ & 4.06$^{(\pm 1.31)}$ & 4.07$^{(\pm 1.31)}$ \\
    Min degree in $G$ & 2.56$^{(\pm 1.24)}$ & 2.16$^{(\pm 1.18)}$ & 1.88$^{(\pm 1.07)}$ & 1.68$^{(\pm 0.95)}$ & 2.56$^{(\pm 1.24)}$ & 2.16$^{(\pm 1.18)}$ & 1.88$^{(\pm 1.07)}$ & 1.68$^{(\pm 0.95)}$ \\
    \# of terms in $G$ & 6.63$^{(\pm 2.33)}$ & 9.18$^{(\pm 3.26)}$ & 11.74$^{(\pm 4.15)}$ & 14.30$^{(\pm 5.03)}$ & 6.63$^{(\pm 2.33)}$ & 9.18$^{(\pm 3.26)}$ & 11.74$^{(\pm 4.15)}$ & 14.30$^{(\pm 5.03)}$ \\
    \gb ratio & 1$^{(\pm 0)}$ & 1$^{(\pm 0)}$ & 1$^{(\pm 0)}$ & 1$^{(\pm 0)}$ & 1$^{(\pm 0)}$ & 1$^{(\pm 0)}$ & 1$^{(\pm 0)}$ & 1$^{(\pm 0)}$ \\
    \bottomrule
  \end{tabularx}
\end{table}

The runtime comparison for datasets with and without density control is given in Tab.~\ref{table:runtime-comparison-full}, and the success rate (i.e., not encountering the timeout) is given in Tab.~\ref{table:forward-generation-success-rate}. The generation of density-controlled datasets $\cD_n^-(k)$ (1,000 samples) requires less runtime, and the proposed method is not always the fastest. However, it is important to remember that the forward method is still not feasible because of the difficulty in sampling overdetermined non-\gb sets~(i.e., $F$s). Generally, such $F$ only leads to a trivial ideal with \gb basis $\{1\}$.  The profile of datasets is given in Tab.~\ref{table:dataset-profile}.

\subsection{Success and failure cases}\label{sec:success-failure-cases}
Tables~\ref{table:success-examples-n2-Q}--\ref{table:success-examples-n5-F31} show examples of success cases. One can see that Transformers can solve many non-trivial instances. 
Tables~\ref{table:failure-examples-Q} and~\ref{table:failure-examples-F31} show examples of failure cases. 
One can see that, interestingly, the incorrect predictions appear reasonable.
Examples are all taken according to their order in each dataset (i.e., no cherry-picking). 
\begin{table}[t]
    \centering
    \caption{Success examples from the $\cD_2^-(\bQ)$ test set. }\label{table:success-examples-n2-Q}

\end{table}

\subsection{Superiority of Transformer in several cases.}\label{sec:superior-cases}
Approaching \gb basis computation using a Transformer has a potential advantage in the runtime because the computational cost has less dependency on the problem difficulty than mathematical algorithms do. However, currently, mathematical algorithms run faster than Transformers because of our naive input scheme. Nevertheless, we observed several examples in our $\cD_n^-(k)$ datasets for which Transformers generate the solutions efficiently, while mathematical algorithms take significantly longer time or encounter a timeout. 

Particularly, we examined the examples in $\cD_n^-(k)$ where several forward methods encounter a timeout with the five-second budget, see Tab.~\ref{table:forward-generation-success-rate}. We fed these examples to Transformer and the three forward algorithms again, but now with a 100-second budget. For such examples, as shown in Tab.~\ref{table:runtime-timeout}, Transformers completed the computation in less than a second, while the two forward algorithms, \textsc{std} and \textsc{slimgb}, often used longer computation time or encountered a timeout.

It is worth noting that \textsc{std} and \textsc{slimgb} are general-purpose algorithms, while \textsc{stdfglm} is specially designed for the zero-dimension ideals. To summarize, Transformers successfully computed \gb bases with much less runtime than general-purpose algorithms for several examples.
This shows a potential advantage of using a Transformer in \gb basis computation, particularly for large-scale problems.

\begin{table}[!t]
  \centering
  \caption{Runtime comparison (in seconds) of forward generation~(F.) and Transformer on timeout examples. Timeout limit: 100 seconds. Red cells indicate timeout. For each configuration, up to first 10 unique samples are shown. Transformer successfully predicted the correct \gb bases for these examples.}
  \label{table:runtime-timeout}
  \begin{tabularx}{\linewidth}{l||*{4}{Y}}
    \toprule
    \multirow{2}{*}{Sample ID} & \multicolumn{4}{c}{$\cD_n^-(\bQ)$ $n=4$ ($\sigma=0.3$)} \\
    \cline{2-5}
    & F. (\textsc{std}) & F. (\textsc{slimgb}) & F. (\textsc{stdfglm}) & Transformer \\
    \Xhline{2\arrayrulewidth}
    23 & 0.005 & \cellcolor{red!30} 100.0 & 0.008 & 0.110 \\
    49 & \cellcolor{red!30} 100.0 & 0.018 & 0.008 & 0.253 \\
    64 & 0.006 & \cellcolor{red!30} 100.0 & 0.008 & 0.147 \\
    171 & \cellcolor{red!30} 100.0 & \cellcolor{red!30} 100.0 & 0.015 & 0.238 \\
    341 & \cellcolor{red!30} 100.0 & 0.011 & 0.008 & 0.212 \\
    384 & \cellcolor{red!30} 100.0 & \cellcolor{red!30} 100.0 & 0.009 & 0.142 \\
    423 & 0.123 & \cellcolor{red!30} 100.0 & 0.008 & 0.167 \\
    530 & \cellcolor{red!30} 100.0 & 0.005 & 0.010 & 0.197 \\
    542 & 0.005 & \cellcolor{red!30} 100.0 & 0.008 & 0.212 \\
    552 & \cellcolor{red!30} 100.0 & \cellcolor{red!30} 100.0 & 0.008 & 0.185 \\
    \hline
    \bottomrule
    \multirow{2}{*}{Sample ID} & \multicolumn{4}{c}{$\cD_n^-(\bQ)$ $n=5$ ($\sigma=0.2$)} \\
    \cline{2-5}
    & F. (\textsc{std}) & F. (\textsc{slimgb}) & F. (\textsc{stdfglm}) & Transformer \\
    \Xhline{2\arrayrulewidth}
    61 & 0.005 & \cellcolor{red!30} 100.0 & 0.009 & 0.170 \\
    76 & 0.004 & \cellcolor{red!30} 100.0 & 0.008 & 0.292 \\
    130 & \cellcolor{red!30} 100.0 & 0.014 & 0.008 & 0.227 \\
    153 & \cellcolor{red!30} 100.0 & 19.384 & 0.009 & 0.220 \\
    175 & \cellcolor{red!30} 100.0 & 34.825 & 0.192 & 0.390 \\
    208 & \cellcolor{red!30} 100.0 & 0.013 & 0.008 & 0.223 \\
    269 & \cellcolor{red!30} 100.0 & 0.006 & 0.069 & 0.320 \\
    295 & 0.005 & \cellcolor{red!30} 100.0 & 0.008 & 0.282 \\
    320 & 0.005 & \cellcolor{red!30} 100.0 & 0.008 & 0.309 \\
    333 & 0.135 & \cellcolor{red!30} 100.0 & 0.008 & 0.296 \\
    \hline
    \bottomrule
    \multirow{2}{*}{Sample ID} & \multicolumn{4}{c}{$\cD_n^-(\bF_7)$ $n=4$ ($\sigma=0.3$)} \\
    \cline{2-5}
    & F. (\textsc{std}) & F. (\textsc{slimgb}) & F. (\textsc{stdfglm}) & Transformer \\
    \Xhline{2\arrayrulewidth}
    407 & \cellcolor{red!30} 100.0 & 4.788 & 0.013 & 0.189 \\
    717 & \cellcolor{red!30} 100.0 & 0.004 & 0.007 & 0.135 \\
    765 & \cellcolor{red!30} 100.0 & 0.004 & 0.007 & 0.122 \\
    915 & \cellcolor{red!30} 100.0 & 5.832 & 0.009 & 0.136 \\
    916 & \cellcolor{red!30} 100.0 & 0.022 & 0.008 & 0.266 \\
    \hline
    \bottomrule
    \multirow{2}{*}{Sample ID} & \multicolumn{4}{c}{$\cD_n^-(\bF_7)$ $n=5$ ($\sigma=0.2$)} \\
    \cline{2-5}
    & F. (\textsc{std}) & F. (\textsc{slimgb}) & F. (\textsc{stdfglm}) & Transformer \\
    \Xhline{2\arrayrulewidth}
    74 & \cellcolor{red!30} 100.0 & \cellcolor{red!30} 100.0 & 0.045 & 0.314 \\
    121 & \cellcolor{red!30} 100.0 & \cellcolor{red!30} 100.0 & 0.011 & 0.285 \\
    256 & \cellcolor{red!30} 100.0 & \cellcolor{red!30} 100.0 & 0.008 & 0.144 \\
    267 & \cellcolor{red!30} 100.0 & 0.005 & 0.008 & 0.284 \\
    274 & \cellcolor{red!30} 100.0 & 0.006 & 0.007 & 0.209 \\
    433 & \cellcolor{red!30} 100.0 & 38.895 & 0.013 & 0.285 \\
    438 & 0.432 & \cellcolor{red!30} 100.0 & 0.011 & 0.270 \\
    492 & \cellcolor{red!30} 100.0 & 0.020 & 0.007 & 0.144 \\
    568 & \cellcolor{red!30} 100.0 & 0.008 & 0.007 & 0.223 \\
    766 & \cellcolor{red!30} 100.0 & \cellcolor{red!30} 100.0 & 0.009 & 0.144 \\
    \hline
    \bottomrule
    \multirow{2}{*}{Sample ID} & \multicolumn{4}{c}{$\cD_n^-(\bF_{31})$ $n=4$ ($\sigma=0.3$)} \\
    \cline{2-5}
    & F. (\textsc{std}) & F. (\textsc{slimgb}) & F. (\textsc{stdfglm}) & Transformer \\
    \Xhline{2\arrayrulewidth}
    954 & \cellcolor{red!30} 100.0 & 0.157 & 0.008 & 0.123 \\
    \hline
    \bottomrule
    \multirow{2}{*}{Sample ID} & \multicolumn{4}{c}{$\cD_n^-(\bF_{31})$ $n=5$ ($\sigma=0.2$)} \\
    \cline{2-5}
    & F. (\textsc{std}) & F. (\textsc{slimgb}) & F. (\textsc{stdfglm}) & Transformer \\
    \Xhline{2\arrayrulewidth}
    196 & \cellcolor{red!30} 100.0 & 0.051 & 0.007 & 0.267 \\
    715 & \cellcolor{red!30} 100.0 & 26.161 & 0.008 & 0.160 \\
    \bottomrule
  \end{tabularx}
\end{table}

\subsection{Generalization to out-distribution samples}\label{sec:out-distribution}

Handling out-distribution samples is beyond the scope of the current work. Several studies of using a Transformer for math problems (e.g., integer multiplication~\citep{dziri2023faith} and linear algebra~\citep{charton2022linear}) addressed out-distribution generalization by controlling the training sample distribution. This is because these problems are of moderate difficulty, and naive training sample generation methods exist. However, this is not the case for our task, and we thus focused on the problem of efficient generation of training samples in this paper. 
Nevertheless, we consider that presenting a limitation of our work by experiments is helpful for future work. Thus, we here present that the Transformers trained on our datasets certainly fail on out-distribution samples through several cases.

\paragraph{Out-distribution samples.} We generate additional datasets $\cD^{\mathrm{u}}_n(k)$ for $k\in\{\bQ, \bF_{7}, \bF_{31}\}$. These datasets are generated as $\cD_n(k)$ with a slight difference in sampling of random polynomials. Originally, a (degree-bounded) random polynomial is obtained using monomials randomly sampled from $\ring_{\le d_{\max}}$. Since there are more high-degree terms than low-degree ones, random polynomials are more likely to be high-degree. In $\cD^{\mathrm{u}}_n(k)$, we instead uniformly sample the degree-bound $d$ from $\bU[1, d_{\max}]$, and then conduct the sampling of monomials. As Tab.~\ref{table:dataset-profile-ex} shows, this change in the sampling strategy causes some distribution shifts.
Table~\ref{table:accuracy-ex} shows the prediction accuracy and support accuracy on the new datasets. As can be seen, the accuracy drops when the base field is a finite field $\bF_p$. When it is $\bQ$, the accuracy drop is moderate.

\begin{table}[!t]
  \centering
  \caption{Dataset profile comparison between $\cD^{\mathrm{u}}(k)$ and $\cD_n^-(k)$. The standard deviation is shown in the superscript.}
  \label{table:dataset-profile-ex}
  \small
  \begin{tabularx}{\linewidth}{l||*{1}{Y|}*{1}{Y||}*{1}{Y|}*{1}{Y||}*{1}{Y|}*{1}{Y}}
    \toprule
    Metric & $\cD^{\mathrm{u}}(\bQ)$& $\cD_n^-(\bQ)$ & $\cD^{\mathrm{u}}(\bF_7)$& $\cD_n^-(\bF_7)$ & $\cD^{\mathrm{u}}(\bF_{31})$& $\cD_n^-(\bF_{31})$ \\
    \Xhline{2\arrayrulewidth}
    Size of $F$ & 2.64$^{(\pm 0.73)}$ & 2.56$^{(\pm 0.71)}$ & 3.06$^{(\pm 0.82)}$ & 3.03$^{(\pm 0.81)}$ & 3.04$^{(\pm 0.79)}$ & 3.01$^{(\pm 0.81)}$ \\
    Max degree in $F$ & 4.97$^{(\pm 1.65)}$ & 7.44$^{(\pm 1.91)}$ & 5.46$^{(\pm 1.77)}$ & 7.91$^{(\pm 2.01)}$ & 5.56$^{(\pm 1.77)}$ & 8.18$^{(\pm 1.98)}$ \\
    Min degree in $F$ & 2.50$^{(\pm 1.50)}$ & 4.19$^{(\pm 1.91)}$ & 2.70$^{(\pm 1.61)}$ & 4.33$^{(\pm 2.04)}$ & 2.85$^{(\pm 1.63)}$ & 4.64$^{(\pm 2.06)}$ \\
    \# of terms in $F$ & 10.56$^{(\pm 6.02)}$ & 15.64$^{(\pm 7.69)}$ & 13.46$^{(\pm 7.14)}$ & 20.02$^{(\pm 9.60)}$ & 13.73$^{(\pm 7.10)}$ & 20.79$^{(\pm 9.68)}$ \\
    \gb ratio & 0$^{(\pm 0)}$ & 0$^{(\pm 0)}$ & 0.002$^{(\pm 0.045)}$ & 0.002$^{(\pm 0.045)}$ & 0$^{(\pm 0)}$ & 0$^{(\pm 0)}$ \\
    \hline
    Size of $G$ & 2$^{(\pm 0)}$ & 2$^{(\pm 0)}$ & 2$^{(\pm 0)}$ & 2$^{(\pm 0)}$ & 2$^{(\pm 0)}$ & 2$^{(\pm 0)}$ \\
    Max degree in $G$ & 2.48$^{(\pm 1.32)}$ & 4.09$^{(\pm 1.31)}$ & 2.53$^{(\pm 1.33)}$ & 3.94$^{(\pm 1.30)}$ & 2.53$^{(\pm 1.38)}$ & 4.09$^{(\pm 1.30)}$ \\
    Min degree in $G$ & 1.21$^{(\pm 0.54)}$ & 2.56$^{(\pm 1.24)}$ & 1.20$^{(\pm 0.53)}$ & 2.37$^{(\pm 1.21)}$ & 1.21$^{(\pm 0.57)}$ & 2.60$^{(\pm 1.24)}$ \\
    \# of terms in $G$ & 3.69$^{(\pm 1.65)}$ & 6.65$^{(\pm 2.32)}$ & 3.72$^{(\pm 1.64)}$ & 6.31$^{(\pm 2.27)}$ & 3.74$^{(\pm 1.73)}$ & 6.70$^{(\pm 2.32)}$ \\
    \gb ratio & 1$^{(\pm 0)}$ & 1$^{(\pm 0)}$ & 1$^{(\pm 0)}$ & 1$^{(\pm 0)}$ & 1$^{(\pm 0)}$ & 1$^{(\pm 0)}$ \\
    \bottomrule
  \end{tabularx}
\end{table}

\begin{table}[!t]
  \centering
    \caption{Accuracy and support accuracy of Transformers on out-distribution evaluation set $\cD_2^{+}(k)$. }
  \label{table:accuracy-ex}
  \begin{tabularx}{\linewidth}{l||*{2}{Y|}*{1}{Y}}
    \toprule
    Metric & $\cD_2^{+}(\bQ)$ & $\cD_2^{+}(\bF_7)$ & $\cD_2^{+}(\bF_{31})$\\
    \Xhline{2\arrayrulewidth}
    accuracy & 83.8 & 52.9 & 36.1  \\
    support acc. & 86.7 & 62.0 & 54.4  \\
    \bottomrule 
  \end{tabularx}
\end{table}

\paragraph{Katsura-n.} Katsura-n is a typical benchmarking example for \gb basis computation algorithms (``-n'' denotes the number of variables). Table~\ref{table:katsura} shows a list of examples for different $n \in \{2, 3, 4\}$ and $k \in \{\bQ, \bF_7, \bF_{31}\}$. While the \gb bases in Katsura-n have a form of Eq.~\eqref{eq:shape-position}, one can readily find its qualitative difference in the non-\gb sets from those in our $\cD_{n}(k)$ datasets (cf. Tables~\ref{table:success-examples-n2-Q}--\ref{table:success-examples-n5-F31}). For example, non-\gb sets in Katsura-n consist of low-degree sparse polynomials, whereas those in $\cD_{n}(k)$ are not because of the generation processes (i.e., the product and sum of polynomials through the multiplication by polynomial matrices $U_1, U_2$). Therefore, Katsura-n examples are greatly out-distributed samples for our Transformers. We fed these samples to the trained Transformers, but only obtained the output sequences that cannot be transformed back to polynomials because of their invalid prefix representation. 

As \gb basis computation is an NP-hard problem, it may not be a good idea to peruse a general solver via learning. Instead, we should ultimately aim at a solver for large-scale but specialized cases. Thus, the generation of application-specific training samples and the pursuit of in-distribution accuracy will be a future direction. From this perspective, existing mathematical benchmark examples may not be practical because they are mostly artificial, empirically found difficult,  and/or designed for easily generating variations in the number of variables $n$.\footnote{For example, refer to \url{https://www-sop.inria.fr/coprin/logiciels/ALIAS/Benches/node1.html}.} They are useful for math algorithms, i.e., the algorithms proved to work for all the cases (i.e., 100\% in/out-distribution samples), but not for our current work because they are out-distribution samples.

\begin{table}[t]
    \centering
    \caption{Katsura-n on $\bQ[\xs]$. Katsura-5 is not presented because of its complexity.}\label{table:katsura}
    \begin{tabularx}{\linewidth}{l|X|X}
        \toprule
         $n$ & $F$              & $G$  \\
        \midrule 
\multirow{2}{*}{2} & $f_1 = x_{0} + 2 x_{1} - 1$                                                      & $g_1 = x_{0} + 2 x_{1} - 1$ \\
                   & $f_2 = x_{0}^{2} - x_{0} + 2 x_{1}^{2}$                                          & $g_2 = x_{1}^{2} - \frac{1}{3} x_{1}$ \\
\hline
\multirow{3}{*}{3} & $f_1 = x_{0} + 2 x_{1} + 2 x_{2} - 1$                                            & $g_1 = x_{0} - 60 x_{2}^{3} + \frac{158}{7} x_{2}^{2} + \frac{8}{7} x_{2} - 1$ \\
                   & $f_2 = x_{0}^{2} - x_{0} + 2 x_{1}^{2} + 2 x_{2}^{2}$                            & $g_2 = x_{1} + 30 x_{2}^{3} - \frac{79}{7} x_{2}^{2} + \frac{3}{7} x_{2}$ \\
                   & $f_3 = 2 x_{0} x_{1} + 2 x_{1} x_{2} - x_{1}$                                    & $g_3 = x_{2}^{4} - \frac{10}{21} x_{2}^{3} + \frac{1}{84} x_{2}^{2} + \frac{1}{84} x_{2}$ \\
\hline
\multirow{4}{*}{4} & $f_1 = x_{0} + 2 x_{1} + 2 x_{2} + 2 x_{3} - 1$                                  & $g_1 = x_{0} - \frac{53230079232}{1971025} x_{3}^{7} + \frac{10415423232}{1971025} x_{3}^{6} + \frac{9146536848}{1971025} x_{3}^{5} - \frac{2158574456}{1971025} x_{3}^{4} - \frac{838935856}{5913075} x_{3}^{3} + \frac{275119624}{5913075} x_{3}^{2} + \frac{4884038}{5913075} x_{3} - 1$ \\
                   & $f_2 = x_{0}^{2} - x_{0} + 2 x_{1}^{2} + 2 x_{2}^{2} + 2 x_{3}^{2}$              & $g_2 = x_{1} - \frac{97197721632}{1971025} x_{3}^{7} + \frac{73975630752}{1971025} x_{3}^{6} - \frac{12121915032}{1971025} x_{3}^{5} - \frac{2760941496}{1971025} x_{3}^{4} + \frac{814792828}{1971025} x_{3}^{3} - \frac{1678512}{1971025} x_{3}^{2} - \frac{9158924}{1971025} x_{3}$ \\
                   & $f_3 = 2 x_{0} x_{1} + 2 x_{1} x_{2} - x_{1} + 2 x_{2} x_{3}$                    & $g_3 = x_{2} + \frac{123812761248}{1971025} x_{3}^{7} - \frac{79183342368}{1971025} x_{3}^{6} + \frac{7548646608}{1971025} x_{3}^{5} + \frac{3840228724}{1971025} x_{3}^{4} - \frac{2024910556}{5913075} x_{3}^{3} - \frac{132524276}{5913075} x_{3}^{2} + \frac{30947828}{5913075} x_{3}$ \\
                   & $f_4 = 2 x_{0} x_{2} + x_{1}^{2} + 2 x_{1} x_{3} - x_{2}$                        & $g_4 = x_{3}^{8} - \frac{8}{11} x_{3}^{7} + \frac{4}{33} x_{3}^{6} + \frac{131}{5346} x_{3}^{5} - \frac{70}{8019} x_{3}^{4} + \frac{1}{3564} x_{3}^{3} + \frac{5}{42768} x_{3}^{2} - \frac{1}{128304} x_{3}$ \\
\hline
        \bottomrule
    \end{tabularx}
\end{table}

\section{Buchberger--M\"oller Algorithm for Prob.~\ref{problem:random-GB}}\label{sec:random-GB-with-BM}
Here, we discuss another approach for Prob.~\ref{problem:random-GB} using the Buchberger--M\"oller (BM) algorithm~\citep{moller1982construction,abbott2005computing}. Although we did not adopt this approach, we include this for completeness because many algorithm variants have been recently developed and applied extensively in machine learning and other data-centric applications.

Given a set of points $\bX\subset k^n$ and a graded term order, the BM algorithm computes a \gb basis of its vanishing ideal $I(\bX) = \{ g \in \ring \mid g(p) = 0, \forall p \in \bX \}$. 
 While several variants follow in computational algebra~\citep{kehrein2006computing,abbott2008stable,heldt2009approximate,fassino2010almost,limbeck2013computation,kera2022border,kera2024monomial}, interestingly, it is also recently tailored for machine learning~\citep{livni2013vanishing,kiraly2014dual,hou2016discriminative,kera2018approximate,kera2019spurious,kera2020gradient,wirth2022conditional,wirth2023approximate}
and has been applied to various contexts such as machine learning~\citep{shao2016nonlinear,yan2018deep}, signal processing~\citep{wang2018nonlinear,wang2019polynomial,wang2020learning}, nonlinear dynamics~\citep{kera2016noise,karimov2020algebraic,karimov2023identifying}, and more~\citep{kera2016vanishing,iraji2017principal,antonova2020analytic}. Such a wide range of applications derives from the distinguishing design of the BM algorithm: while most computer-algebraic algorithms take a set of polynomials as input, it takes a set of points (i.e., dataset).

Therefore, to address Prob.~\ref{problem:random-GB}, one may consider using the BM algorithm or its variants, e.g., by running the BM algorithm $m$ times while sampling diverse sets of points. An important caveat is that \gb bases that can be given by the BM algorithm may be more restrictive than those considered in the main text~(i.e., the \gb bases of ideals in shape position). For example, the former generates the largest ideals that have given $k$-rational points for their roots, whereas this is not the case for the latter.
Another drawback of using the BM algorithm is its large computational cost. The time complexity of the BM algorithm is $\cO(n\cdot|\bX|^3)$. 
Furthermore, we need $\cO(n^{d})$ points to obtain a \gb basis that includes a polynomial of degree $d$ in the average case. 
Therefore, the BM algorithm does not fit our settings that a large number of \gb bases are needed (i.e., $m\approx 10^6$). Accelerating the BM algorithm by reusing the results of runs instead of independently running the algorithm many times can be interesting for future work.

\section{Open Questions}\label{sec:open-Qs}
\paragraph{Random generation of \gb bases}
To our knowledge, no prior studies addressed this problem. Our study focuses on generic 0-dimensional ideals. These ideals are generally in shape position, and a simple sampling strategy is available. However, some applications may be interested in other classes of ideals~(e.g., positive dimensional or binomial ones) or a particular subset of 0-dimensional ideals~(e.g., those associated with a single solution in the coefficient field). The former case is an open problem. The latter case may be addressed by the Buchberger--M\"oller algorithms~\citep{moller1982construction}~(cf.~App.~\ref{sec:random-GB-with-BM}).

\paragraph{Backward \gb problem.}
Machine learning models perform better for in-distribution samples than out-distribution samples. Thus, it is essential to design a training sample distribution that is close to one's use case. As noted in Sec.~\ref{sec:paper-scope}, polynomial systems (either \gb basis or not) take a domain-specific form. Backward generation gives us control over \gb bases but not for non-\gb sets. 
Therefore, we need a well-tailored backward generation method specialized to an application, as the specialized \gb basis computation algorithms in computational algebra. In this paper, we addressed a generic case.
Proposition~\ref{prop:backward-sq} states that any matrix $A \in \SL{\ring}$ satisfies Prob. \ref{problem:F=AG}. 
This raises two sets of open questions:
(i) \textit{are there matrices outside $\SL{\ring}$ satisfying Prob. \ref{problem:F=AG}? Can we sample them?} and (ii) \textit{is it possible to efficiently sample matrices of
$\SL{\ring}$?}
To efficiently generate our dataset, we have restricted ourselves
to sampling matrices having a Bruhat decomposition (see Eq.~\eqref{eq:Bruhat}),
which is a strict subset of $\SL{\ring}$. Sampling matrices
in $\SL{\ring}$ remains an open question.
Thanks to Suslin's stability theorem and its algorithmic proofs 
\citep{Suslin77,ParkWoodburn95,LombardiYengui05},
$\SL{\ring}$ is generated by elementary matrices
and a decomposition into a product of elementary matrices
can be computed algorithmically.
One may hope to use sampling of elementary matrices
to sample matrices of $\SL{\ring}$.
It is unclear whether this can be efficient as
many elementary matrices are needed~\citep{LombardiYengui05}.

\paragraph{Distribution analysis.}
Several studies have reported that careful design of a training set leads to a better generalization of Transformers~\citep{charton2022linear,charton2024learning}. Algebraically, analyzing the distribution of \gb bases and the non-\gb sets obtained from some algorithms is challenging, particularly when some additional structures (e.g., sparsity) are injected. Thus, the first step may be to investigate the generic case (i.e., dense polynomials). In this case, Thm.~\ref{thm:backward}(3) may be helpful to design an algorithm that is certified to be able to yield all possible $G$ and $F$ almost uniformly. While dataset generation algorithms are expected to run efficiently for practicality, a solid analysis may be of independent interest in computational algebra.

\paragraph{Long mathematical expressions.}
As in most of the prior studies, we used a Transformer of a standard architecture to see the necessity of a specialized model. From the accuracy aspect, such a vanilla model may be sufficient for $\bQ[\xs]$ but not for $\bF_p[\xs]$. From the efficiency perspective, the quadratic cost of the attention mechanism prevents us from scaling up the problem size. For large $n$, both non-GB sets and \gb bases generally consist of many dense polynomials, leading to long input and output sequences. Unlike natural language processing tasks, the input sequence cannot be split as all the symbols are related in mathematical tasks. It is worth studying several attention mechanisms that work with sub-quadratic memory cost~\citep{beltagy2020longformer,kitaev2020reformer,ding2023longnet,sun2023retentive,chen2023primal} can be introduced with a small degradation of performance even for mathematical sequences, which have a different nature from natural language (e.g., a single modification of the input sequence can completely change the solution of the problem).

\end{document}